\documentclass[10pt]{article}
\usepackage[utf8]{inputenc}
\usepackage[english]{babel}
\usepackage{amsfonts,amsmath,amssymb,amsthm}
\usepackage[colorlinks=true]{hyperref}
\usepackage[dvipsnames]{xcolor}
\usepackage{cases}
\usepackage{empheq}
\usepackage{cleveref}
\usepackage{bbm}
\usepackage{enumerate}
\usepackage{geometry}
\usepackage[title]{appendix}
\usepackage{color}
 \usepackage[shortlabels]{enumitem}
 \newtheorem{lemma}{\textit{Lemma}}
  \newtheorem{theorem}{\textit{Theorem}}
  \newtheorem{definition}{\textit{Definition}}
  \newtheorem{proposition}{\textit{Proposition}}
   \newtheorem{remark}{\textit{Remark}}
   
   \newtheorem{cor}{Corollary}
\bibliographystyle{acm}
\geometry{hmargin=2.5cm,vmargin=2cm}
\def\E{\mathbb{E}}
\def\R{\mathbb{R}}
\def\P{\mathbb{P}}
\def\Q{\mathbb{Q}}
\def\N{\mathbb{N}}
\def\W{\mathbb{W}}

\def\1{\mathbbm{1}}
\def\F{\mathcal{F}}

\definecolor{MidnightBlue}{rgb}{0.1953125,0.1953125,0.796875}

\newcommand{\Pp}{\mathcal P}

\newcommand{\modif}[1]{#1}
\title{Mean--field limit of a particle approximation of the one-dimensional parabolic--parabolic Keller-Segel model without smoothing}
\author{Jean-Fran\c cois Jabir\footnote{National Research University 
Higher School of Economics, Shabolovka $28/11$, Moscow, 
Russian
Federation (jjabir@hse.ru).}, Denis Talay\footnote{TOSCA, Inria Sophia Antipolis-Méditerranée, 2004 Route des Lucioles, 06902 Valbonne, France (denis.talay@inria.fr, milica.tomasevic@inria.fr).} and Milica Toma\v{s}evi\'c \footnotemark[\value{footnote}] \footnote{Universit\'e C\^ote d'Azur, Campus Valrose, Batiment M, 28 Avenue de Valrose, 06108 Nice CEDEX 2, France. }}
\date{\today}
\date{\empty}
\begin{document}
\maketitle
\noindent
\textbf{Abstract:} In this work, we prove the well--posedness of a singularly interacting stochastic particle system and we establish propagation of chaos result towards the one-dimensional parabolic-parabolic Keller-Segel model.\\
\textbf{Key words:} Interacting particle system; Singular McKean-Vlasov dynamic; Keller-Segel model; .\\
\textbf{Classification:} 39A50, 60H30, 80C22.
\section{Introduction}

The standard $d$-dimensional parabolic--parabolic Keller--Segel model
for chemotaxis describes  the time evolution of the density $\rho_t$ of
a cell population and of the concentration $c_t$ of a chemical
attractant:
\begin{equation}
\label{KS}
\begin{cases}
&  \partial_t \rho(t,x)=\nabla \cdot (\frac{1}{2}\nabla \rho -  \chi\rho \nabla
c), \quad t>0, \ x \in \R^d, \\
 & \alpha \partial_t c(t,x) = \frac{1}{2} \triangle c  -\lambda c + \rho,  \quad
 t>0, \ x \in \R^d,\\
 & \rho(0,x)=\rho_0(x), c(0,x)=c_0(x),
\end{cases}
\end{equation}
\modif{for some parameters $\chi>0$, $\lambda\geq 0$ and $\alpha \geq
0$}. See e.g. Corrias {\it et al.} ~\cite{Corrias2014},
Perthame~\cite{perthame} and
references therein for theoretical results on this system of PDEs and
applications to \modif{b}iology. \modif{When $\alpha=0$, the system \eqref{KS} is parabolic--elliptic, and when $\alpha=1$~(or more generally, when $0<\alpha\leq 1$), the system is parabolic--parabolic.}

For the parabolic--elliptic version of the model with $d=2$, the first stochastic interpretation of this system is due to Ha\v{s}kovec and Schmeiser~\cite{haskovec-schmeiser} who analyze a particle system with
McKean--Vlasov interactions and Brownian noise. More precisely, as the
ideal interaction kernel should be strongly singular, they introduce a
kernel with a cut-off parameter and obtain the tightness of the
particle probability distributions w.r.t. the cut-off parameter and the
number of particles. They also obtain partial results in the direction
of the propagation of chaos. More recently, in the
subcritical case, that is, when the parameter~$\chi$ of the
parabolic--elliptic model is small enough, Fournier and
Jourdain~\cite{fournier-jourdain} obtain the well--posedness of a
particle system without cut-off. In addition, they obtain a consistency property which is weaker than the propagation of chaos. They also describe complex
behaviors of the particle system in the sub and super critical cases.
Cattiaux and P\'ed\`eches~\cite{cattiaux-pedeches} obtain the
well-posedness of this particle system without cut-off by using Dirichlet forms rather
than pathwise approximation techniques.

For a parabolic--parabolic version of the model with a smooth coupling
between $\rho_t$ and $c_t$, Budhiraja and Fan \cite{Budhiraja} study a
particle system with a smooth time integrated kernel and
prove it propagates chaos. Moreover, adding a forcing potential
term to the model, under a suitable convexity assumption, they obtain
uniform in time concentration
inequalities for the particle system and uniform in time error
estimates for a
numerical approximation of the limit non-linear process. 

For the pure parabolic--parabolic model without cut-off or
smoothing, in the one-dimensional case with $\alpha=1$, Talay and
Toma\v{s}evi\'c~\cite{Mi-De} have proved the well-posedness of PDE \eqref{KS} and of the following non-linear SDE:
\begin{equation}
\label{NLSDE}
\begin{cases}
& dX_t= b(t,X_t)dt +\Big\{\chi \int_0^t (K_{t-s}\star \rho_s)(X_t)ds\Big\} dt
+  dW_t, \quad t> 0, \\
& \rho_s(y)dy:=  \mathcal{L}( X_{s}),\quad X_0 \sim \rho_0(x)dx,
\end{cases}
\end{equation}
where $K_t(x):=e^{-\lambda t}\frac{\partial}{\partial x}(
\frac{1}{(2\pi t)^{1/2}}e^{-\frac{x^2}{2t}})$ and $b(t,x)=e^{-\lambda
t}  \frac{\partial}{\partial x} \E[ c_0(x+W_t)]$.

Under the sole condition that the initial probability
law~$\mathcal{L}(X_0)$ has a density,
it is shown that the law $\mathcal{L}(X)$ uniquely solves a non-linear
martingale
problem and its time marginals have densities. These densities coupled
with a suitable transformation of them uniquely solve the
one--dimensional parabolic--parabolic Keller--Segel system without
cut-off. In Toma\v{s}evi\'c~\cite{Mi-De-2D}, additional techniques are
being developed for the two-dimensional version of~(\ref{NLSDE}).

The objective of this note is to analyze the particle system related to \eqref{NLSDE}. It inherits from the limit equation
that at each time $t>0$ each particle interacts in a singular way with
the past of all the other particles. We prove that the particle system
is well--posed and propagates chaos to the unique weak solution of
\eqref{NLSDE}. Compared to the stochastic
particle systems introduced for the parabolic--elliptic model, an
interesting fact occurs: the difficulties arising from the singular
interaction can now be resolved by using purely Brownian techniques
rather than by using Bessel processes. Due to the singular nature of
the kernel $K$, we need to introduce a partial Girsanov transform of
the $N$-particle system in order to obtain uniform in $N$ bounds for
moments of the corresponding exponential martingale. Our calculation is
based on the fact that the kernel $K$ is in $L^1(0,T;L^2(\R))$. We aim
to address in the close future the multi-dimensional Keller--Segel
particle system where the $L^1(0,T;L^2(\R^{d}))$-norm of the kernel is
infinite.


The paper is organized as follows. In Section~\ref{sec:main-results}
we state our two main results and comment our methodology.
In Section~\ref{sec:preliminaries} and Appendix we prove technical lemmas.
In Section~\ref{sec:propagation-chaos} we prove our main results.

In all the paper we denote by $C$ any positive real number independent of $N$.
\section{Main results}
\label{sec:main-results}

Our main results concern the well--posedness and propagation of chaos
of
\begin{equation}
\label{PSsimple}
\begin{cases}
&dX_t^{i,N}= \left\{\frac{1}{N} \sum_{j=1,j\neq i}^N \int_0^t
K_{t-s}(X_t^{i,N}-X_s^{j,N})ds~\mathbbm{1}_{{\{X_t^{i,N}\neq
X_t^{j,N}\}}}  \right\}dt + dW_t^i, \\
& X_0^{i,N} \text{ i.i.d. and  independent of } W:=(W^i, 1\leq
i \leq N),
\end{cases}
\end{equation}
where 
$K_t(x)=\frac{-x}{\sqrt{2\pi}
t^{3/2}}e^{-\frac{x^2}{\modif{2}t}}$ and the $W^i$'s 
 are $N$ independent standard Brownian motions.
It corresponds to~$\modif{\alpha=1}$, $\lambda =0$, $\chi =1$, and \modif{$c_0' \equiv 0$}.
It is easy to extend our methodology to~(\ref{NLSDE}) under the
hypotheses made in~\cite{Mi-De}.

\begin{theorem}
\label{PSexistence}
Given $0<T<\infty$ and $N \in \N$,
 there exists a weak solution $(\Omega,\F,
(\F_t;\,0\leq t\leq T),\Q^N, W, X^N)$ to the $N$-interacting particle
system \eqref{PSsimple} that satisfies, for any $1\leq i \leq N$,
\begin{equation}\label{DriftL2}
 \Q^N\left(\int_0^T\left(\frac{1}{N}\sum_{j=1,j\neq i}^N \int_0^t
 K_{t-s}(X_t^{i,N}-X_s^{j,N})ds \mathbbm{1}_{\{X_t^{i,N}\neq
 X_t^{j,N}\}}\right)^2\,dt<\infty\right)=1.
\end{equation}

\end{theorem}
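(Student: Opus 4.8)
The plan is to realize the solution by a Girsanov change of measure starting from a reference system of independent Brownian motions, the crux being to verify that the corresponding exponential is a true martingale. Let $\mu_0$ denote the common law of $X_0^{i,N}$ (an arbitrary probability measure on $\R$; nothing more is needed here). I would work on a filtered space carrying, under a probability $\P^N$, i.i.d.\ variables $X_0^{i,N}\sim\mu_0$ and $N$ independent Brownian motions $W^1,\dots,W^N$, set $X_t^{i,N}:=X_0^{i,N}+W_t^i$ and
\[
b_t^{i,N}\ :=\ \frac1N\sum_{j\neq i}\int_0^t K_{t-s}\big(X_t^{i,N}-X_s^{j,N}\big)\,ds ,\qquad 0\le t\le T
\]
(the indicator $\1_{\{X_t^{i,N}\neq X_t^{j,N}\}}$ equals $1$ for $dt\otimes d\P^N$-a.e.\ $(t,\omega)$, so it is dropped). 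Granting that $Z_t^N:=\exp\big(\sum_{i=1}^N\int_0^t b_s^{i,N}\,dW_s^i-\tfrac12\sum_{i=1}^N\int_0^t|b_s^{i,N}|^2\,ds\big)$ is a true $\P^N$-martingale, the measure $\Q^N:=Z_T^N\,\P^N$ is a probability, under which Girsanov's theorem makes $\tilde W^i:=W^i-\int_0^\cdot b_s^{i,N}\,ds$ independent Brownian motions; since $X_t^{i,N}=X_0^{i,N}+\int_0^t b_s^{i,N}\,ds+\tilde W_t^i$, the tuple $\big(\Omega,\F,(\F_t),\Q^N,\tilde W,X^N\big)$ is a weak solution of \eqref{PSsimple}. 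As $Z_T^N>0$ a.s.\ we have $\Q^N\sim\P^N$, so \eqref{DriftL2} will follow from its $\P^N$-version, which itself comes out of the martingale argument below.

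\noindent\emph{Reduction to a one-dimensional drift in $L^\infty_tL^2_x$.} Fix $i$ and condition on $\Gg^{-i}:=\sigma\big(X_0^{j,N},W^j:j\neq i\big)$. The paths $X^{j,N}$, $j\neq i$, then become frozen continuous functions and $b_t^{i,N}=g_i(t,X_t^{i,N})$ with $g_i(t,x):=\frac1N\sum_{j\neq i}\int_0^t K_{t-s}(x-X_s^{j,N})\,ds$. A direct computation gives $\|K_u\|_{L^2(\R)}=c\,u^{-3/4}$, so $u\mapsto\|K_u\|_{L^2(\R)}\in L^1(0,T)$ --- this is the structural property emphasized in the Introduction --- and Minkowski's integral inequality together with translation invariance of $\|\cdot\|_{L^2(\R)}$ yields
\[
\sup_{0\le t\le T}\|g_i(t,\cdot)\|_{L^2(\R)}\ \le\ \int_0^T\|K_u\|_{L^2(\R)}\,du\ =:\ C\ <\ \infty ,
\]
with $C$ deterministic and independent of $i$, $N$, and of the realization of $\Gg^{-i}$. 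Conditionally, particle $i$ thus carries a drift in $L^\infty(0,T;L^2(\R))$, which in dimension one falls within the Krylov--R\"ockner/Ladyzhenskaya--Prodi--Serrin regime.

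\noindent\emph{Novikov's criterion via Khasminskii's lemma.} Writing $\gamma_u$ for the centred Gaussian density of variance $u$, for a Brownian motion $B$ started from $(s,x)$ one has $\E[|g_i(r,B_r)|^2]\le\|\gamma_{r-s}\|_\infty\|g_i(r,\cdot)\|_{L^2(\R)}^2\le C^2/\sqrt{2\pi(r-s)}$, whence $\sup_{s,x}\E^{s,x}\big[\int_s^{s+h}|g_i(r,B_r)|^2\,dr\big]\le C'\sqrt h\to0$ as $h\downarrow0$, uniformly in $i,N$ and in the realization of $\Gg^{-i}$. Khasminskii's lemma, applied on a sufficiently fine deterministic partition of $[0,T]$, then gives $\sup_x\E^x\big[\exp\big(\lambda\int_0^T|g_i(r,B_r)|^2\,dr\big)\big]\le A_\lambda<\infty$ for every $\lambda>0$, with $A_\lambda$ depending only on $\lambda,C,T$. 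Since, under $\P^N$, $X^{i,N}$ is conditionally on $\Gg^{-i}$ a Brownian motion with initial law $\mu_0$ and independent of $\Gg^{-i}$, taking conditional expectations gives $\E^{\P^N}\big[\exp(\lambda\int_0^T|b_t^{i,N}|^2\,dt)\big]\le A_\lambda$ for each $i$; a generalized H\"older inequality over $i$ (with all exponents equal to $N$) then yields
\[
\E^{\P^N}\Big[\exp\Big(\tfrac12\sum_{i=1}^N\int_0^T|b_t^{i,N}|^2\,dt\Big)\Big]\ \le\ A_{N/2}\ <\ \infty ,
\]
which is Novikov's criterion for the $N$-dimensional drift $(b^{1,N},\dots,b^{N,N})$. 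Hence $Z^N$ is a true $\P^N$-martingale, the construction of the first paragraph goes through, $\int_0^T|b_t^{i,N}|^2\,dt<\infty$ $\P^N$-a.s.\ (indeed with finite exponential moments), and this passes to $\Q^N$ by equivalence, which is \eqref{DriftL2}.

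\noindent\emph{Main obstacle.} Everything hinges on the martingale property of $Z^N$, and it is exactly the integrability $K\in L^1(0,T;L^2(\R))$ (via $\|K_u\|_{L^2(\R)}\sim u^{-3/4}$) that makes the conditional drift $g_i$ uniformly bounded in $L^\infty_tL^2_x$ and thereby brings the problem within reach of the Khasminskii/Portenko-type estimates; without it this scheme fails. I also note that the H\"older step inflates $\lambda$ by a factor $N$, so the bound $A_{N/2}$ deteriorates as $N\to\infty$: obtaining $N$-uniform control of the exponential martingale --- indispensable for the propagation of chaos --- is precisely what the ``partial Girsanov transform'' announced in the Introduction is for, and is not required for the present existence statement.
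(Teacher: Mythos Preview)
Your proof is correct and follows essentially the same strategy as the paper: construct the solution by Girsanov from independent Brownian motions and verify Novikov via a Khasminskii-type exponential estimate resting on $\|K_u\|_{L^2(\R)}=c\,u^{-3/4}\in L^1(0,T)$. The only organizational difference is that the paper applies Jensen twice to reduce $\exp(\kappa\int_0^T|B_t^N|^2\,dt)$ to an average over pairs $\E[\exp(\kappa N\int_0^T F_t(\bar X^i,\bar X^j)\,dt)]$ and then invokes its Proposition~\ref{est1} (Lemmas~\ref{MainLemmas1}--\ref{Mainlemmas3}), whereas you condition on $\Gg^{-i}$ to obtain a deterministic drift $g_i\in L^\infty_tL^2_x$, run the standard single-particle Khasminskii, and recombine over $i$ by H\"older; both routes produce the same $N$-dependent constant $C(T,N,\kappa)$ and your closing remark about why this is harmless here but insufficient for propagation of chaos matches the paper's own discussion.
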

In view of Karatzas and Shreve ~ \cite[Chapter 5, Proposition 3.10]{KaratzasShreve}, one has the following uniqueness result:
\begin{cor}
Weak uniqueness holds in the class of weak solutions satisfying \eqref{DriftL2}.
\end{cor}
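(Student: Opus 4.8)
The plan is to check that the situation of Theorem~\ref{PSexistence} is precisely the one covered by the weak-uniqueness criterion of \cite[Chapter~5, Proposition~3.10]{KaratzasShreve}, whose only hypothesis on the (path-dependent) drift is the square-integrability bound \eqref{DriftL2}. Working on the canonical space $C([0,T];\R^N)$ with coordinate process $X=(X^1,\dots,X^N)$ and letting $\mu$ be the common law of the $X^i_0$, a weak solution of \eqref{PSsimple} satisfying \eqref{DriftL2} is exactly a probability measure $\Q^N$ under which the $X^i_0$ are i.i.d.\ with law $\mu$, the process $M_t:=X_t-X_0-\int_0^t b_s(X)\,ds$ is an $N$-dimensional $\Q^N$-Brownian motion, and $\int_0^T|b_s(X)|^2\,ds<\infty$ $\Q^N$-a.s., where $b^i_s(X)=\frac1N\sum_{j\neq i}\big(\int_0^s K_{s-u}(X^i_s-X^j_u)\,du\big)\1_{\{X^i_s\neq X^j_s\}}$. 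The first step is to record that $(s,\omega)\mapsto b_s(X)$ is progressively measurable: it is built from time integrals of $(s,u,x)\mapsto K_{s-u}(x)$ composed with path evaluations, multiplied by the jointly measurable indicators $\1_{\{X^i_s\neq X^j_s\}}$, and no property of $K$ beyond this is used at this stage. Granting this, \cite[Chapter~5, Proposition~3.10]{KaratzasShreve} yields that the $\Q^N$-law of $X$ is uniquely determined, which is the assertion of the corollary.

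For completeness I would spell out the mechanism behind that proposition, since it is what is really being invoked. Set $\tau_n:=\inf\{t\in[0,T]:\int_0^t|b_s(X)|^2\,ds\geq n\}$, with $\inf\emptyset=T$; this is the same path functional for every admissible solution, and \eqref{DriftL2} forces $\tau_n\uparrow T$ a.s.\ (indeed $\tau_n=T$ for all large $n$, a.s.). On the interval $[0,\tau_n]$ the exponential $Z^n_t:=\mathcal{E}\!\big(-\!\int_0^{\cdot\wedge\tau_n}b_s(X)\,dM_s\big)_t$ is a true $\Q^N$-martingale by Novikov's criterion, since $\E^{\Q^N}\big[\exp(\tfrac12\int_0^{\tau_n}|b_s(X)|^2\,ds)\big]\leq e^{n/2}$. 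Under $\widetilde\Q^N_n:=Z^n_T\cdot\Q^N$, Girsanov's theorem and Lévy's characterization show that $(X_t-X_0)_{0\le t\le\tau_n}$ is a Brownian motion stopped at $\tau_n$ and independent of $X_0$; hence the $\widetilde\Q^N_n$-law of $(X_0,X-X_0)$ is the fixed product measure $\mu^{\otimes N}\otimes\W$, where $\W$ is the Wiener measure on $C([0,T];\R^N)$. Reversing the change of measure, for every bounded measurable $G$ one has $\E^{\Q^N}[G(X_{\cdot\wedge\tau_n})]=\E^{\widetilde\Q^N_n}\big[G(X_{\cdot\wedge\tau_n})\,(Z^n_T)^{-1}\big]$, and both $G(X_{\cdot\wedge\tau_n})$ and $(Z^n_T)^{-1}$ are, after stopping at $\tau_n$, fixed measurable functionals of the pair $(X_0,X-X_0)$; the right-hand side is therefore computed entirely against $\mu^{\otimes N}\otimes\W$ and does not depend on the chosen solution. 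Thus the $\Q^N$-law of $X_{\cdot\wedge\tau_n}$ is the same for all solutions in the class \eqref{DriftL2}, and letting $n\to\infty$, so that $X_{\cdot\wedge\tau_n}\to X$ uniformly on $[0,T]$ a.s., the $\Q^N$-law of $X$ is uniquely determined.

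The point that needs care — and the reason \eqref{DriftL2} is imposed as a selection condition on the admissible class rather than deduced afterwards — is that the Girsanov exponential attached to the \emph{full} drift need not be a true martingale; the singularity of the kernel $K$ makes $Z^n$ degenerate into a strict local martingale as $n\to\infty$, much as for Bessel-type drifts. This is exactly why the argument has to be localized along the $\tau_n$, and why weak uniqueness can only be claimed inside the class \eqref{DriftL2}. The remaining ingredients — progressive measurability of $b$, the Novikov bound up to $\tau_n$, and the identification of the reversed density as a fixed functional on Wiener space — are routine, so I expect no genuine obstacle beyond setting up the localization cleanly.
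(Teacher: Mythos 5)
Your proof is correct and follows essentially the paper's own route: the corollary is obtained precisely by invoking \cite[Chapter 5, Proposition 3.10]{KaratzasShreve}, for which \eqref{DriftL2} is exactly the required hypothesis, and your extra paragraphs simply unpack the localization/Girsanov mechanism behind that proposition. The only (cosmetic) imprecision is that under $\widetilde\Q^N_n$ it is the stopped pair $(X_0,\,X_{\cdot\wedge\tau_n}-X_0)$ whose law is pinned down by $\mu^{\otimes N}\otimes\W$, not the full path $X-X_0$; since your identification of solution-independent functionals only uses the process stopped at $\tau_n$, this does not affect the argument.
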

The construction of a weak solution to \eqref{PSsimple} involves
arguments used by Krylov  and R\"ockner \cite[Section $3$]{KryRoc-05}
to construct a weak solution to SDEs with singular drifts. It relies on
the Girsanov transform which removes all the drifts of
\eqref{PSsimple}.
\begin{remark}
Our construction shows that the law of the particle system is
equivalent to Wiener's measure. Thus, a.s. the set
$\{t\leq T, X_t^{i,N} = X_t^{j,N}\}$ has Lebesgue measure zero.
\end{remark}
Our second main theorem concerns the propagation of chaos of the system \eqref{PSsimple}. \modif{Before we proceed to its statement}\modif{, we need to define the non-linear martingale problem (MPKS) associated to the non-linear SDE:
\begin{equation}
\label{NLSDEsimple}
\begin{cases}
& dX_t= \Big\{\int_0^t (K_{t-s}\star \rho_s)(X_t)ds\Big\} dt
+ dW_t, \quad t\leq  T, \\
& \rho_s(y)dy:=  \mathcal{L}( X_{s}),\quad X_0 \sim \rho_0(x)dx.
\end{cases}
\end{equation}
 For any measurable space $E$ we
denote by $\mathcal{P}(E)$ the set of probability measures on~$E$.}

\modif{
\begin{definition}
$\Q \in \mathcal{P}(C[0,T];\R)$ is a solution to (MPKS) if:
\begin{enumerate}[(i)]
\item $\Q_0(dx) = \rho_0(x)~dx$;
\item For any $t\in (0,T]$, the one dimensional time marginal~$\Q_t$
of $\Q$ has ~a density~ $\rho_t$ w.r.t. Lebesgue measure on $\R$ which belongs to $L^2(\R)$ ~and~satisfies~
$$ \exists C_T,~~\forall\, 0<t\leq T, ~~~ \Vert \rho_t\Vert_{L^2(\R)}
\leq \frac{C_{T}}{t^{1/4}}; $$
\item Denoting by $(x(t);\,t\leq T)$ the canonical process of
$C([0,T];\R)$, we have: For any $f \in C_b^2(\R)$, the process defined
by
$$M_t:=f(x(t))-f(x(0))-\int_0^t \left(\Big(\int_0^s \int K_{s-
r}(x(s)-y) \rho_r(y)dydr\Big)f'(x(s))+\frac{1}{2} f''(x(s))\right)\,ds$$
is a $\Q$-martingale w.r.t. the canonical filtration.
\end{enumerate}
\end{definition}
}

\modif{In \cite{Mi-De}, the authors prove that (MPKS) admits a unique solution and that a suitable notion of weak solution to \eqref{NLSDEsimple}
is equivalent to the notion of solution to (MPKS)}.
%
\begin{theorem}
\label{PSPropaChaos} Assume that \modif{the $X_0^{i,N}$'s are i.i.d. and}~ that the initial distribution of $X_0^{1,N}$
has a density $\rho_0$. 
The empirical measure
$\mu^N=\frac{1}{N}\sum_{i=1}^N\delta_{X^{i,N}}$ of \eqref{PSsimple}
converges in the weak sense, when $N \to \infty$,  to the
unique weak solution of \eqref{NLSDEsimple}.

\end{theorem}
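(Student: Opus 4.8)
The plan is to establish propagation of chaos via a compactness--uniqueness argument, exploiting that Theorem~\ref{PSexistence} already furnishes, for each $N$, a weak solution whose law is equivalent to Wiener measure. First I would prove tightness of the sequence of laws of the empirical measures $(\mu^N)_N$ on $\mathcal{P}(C([0,T];\R))$. Since each coordinate process $X^{i,N}$ is a Brownian motion plus a drift that is square--integrable in time $\Q^N$--a.s. by \eqref{DriftL2}, tightness of $\mathcal{L}(X^{i,N})$ would follow from a uniform--in--$N$ bound on $\E^{\Q^N}\big[\int_0^T |b^{i,N}_t|^2\,dt\big]$ where $b^{i,N}_t$ denotes the drift in \eqref{PSsimple}; this is exactly the kind of estimate the introduction advertises, coming from $K \in L^1(0,T;L^2(\R))$ together with the uniform $L^2$ density bound $\|\rho^N_t\|_{L^2} \lesssim t^{-1/4}$ obtained through the partial Girsanov transform. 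By exchangeability, tightness of the one--particle laws gives tightness of the empirical measure laws (e.g. via the criterion of Sznitman). Extract a subsequence along which $\mathrm{Law}(\mu^N) \Rightarrow \Pi$ for some random probability measure; one then shows $\Pi$ is a.s.\ a Dirac mass at a deterministic $\Q_\infty$, again by exchangeability and the fact that the pairwise interaction is a bounded functional in the relevant sense.

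Next I would identify any such limit point $\Q_\infty$ as a solution of (MPKS). The martingale problem formulation is tailor--made for this: for $f \in C_b^2(\R)$ one wants to pass to the limit in the empirical version of $M_t$, i.e.\ in
\[
F(\mu^N) := \E^{\Q^N}\!\left[ \Big( f(x(t)) - f(x(s)) - \int_s^t \big( b^{\mu^N}_r f'(x(r)) + \tfrac12 f''(x(r)) \big)\,dr \Big)\, g(x(r_1),\dots,x(r_k)) \right]
\]
for bounded continuous $g$ and $0\le r_1 < \dots < r_k \le s < t$, where $b^{\mu}_r(y) = \int_0^r \int K_{r-u}(y-z)\,\mu_u(dz)\,du$. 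The drift term is the delicate one because $K$ is singular, so the map $\mu \mapsto b^\mu$ is not weakly continuous on all of $\mathcal{P}$. The fix is to restrict attention to measures whose marginals have $L^2$ densities satisfying the $t^{-1/4}$ bound: I would first show that every limit point inherits this bound (lower semicontinuity of the $L^2$ norm under weak convergence, applied after establishing a uniform--in--$N$ bound on $\E^{\Q^N}\|\rho^N_t\|_{L^2}$ or, more robustly, by showing the limit marginals have densities via the Girsanov/Wiener--equivalence structure), and then show that on this restricted set $\mu \mapsto \int_s^t b^\mu_r f'(x(r))\,dr$ is continuous enough for the passage to the limit, using $\|K_{r-u}\star \rho^N_u\|_\infty \lesssim (r-u)^{-1/2}\|\rho^N_u\|_{L^2} \lesssim (r-u)^{-1/2} u^{-1/4}$, which is integrable. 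The $\frac1N \to 0$ correction from removing the $j=i$ term and the indicator $\mathbbm 1_{\{X^{i,N}_t \neq X^{j,N}_t\}}$ (harmless by the Remark, since the exceptional set has zero Lebesgue measure) both vanish in the limit.

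The concluding step is uniqueness: by \cite{Mi-De} (quoted in the excerpt) (MPKS) has a unique solution, so the limit point $\Q_\infty$ is that unique solution, which is also the law of the unique weak solution of \eqref{NLSDEsimple}. Since every subsequential limit equals this same $\Q_\infty$, the full sequence converges, establishing Theorem~\ref{PSPropaChaos}. I expect the main obstacle to be the identification step, specifically justifying the interchange of limit and drift integral: one must propagate the quantitative density bound from the prelimit particle system to the limit and argue continuity of $\mu \mapsto b^\mu$ along the converging subsequence on the set of measures satisfying that bound, since naive weak convergence is insufficient against the singular kernel $K_t(x) = -x(2\pi)^{-1/2} t^{-3/2} e^{-x^2/(2t)}$. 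A secondary technical point is obtaining the uniform $L^2$ density estimate for $\rho^N_t$ itself, which should follow from the partial Girsanov change of measure controlling the exponential martingale uniformly in $N$, exactly as flagged in the introduction and presumably proved in Section~\ref{sec:preliminaries}.
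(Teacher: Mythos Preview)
Your high-level plan (tightness via exchangeability and the partial Girsanov transform, identification of limit points as solutions to (MPKS), conclusion by the uniqueness from \cite{Mi-De}) matches the paper's. However, there is a genuine gap in your identification step, and the paper closes it by a different mechanism than the one you propose.

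You propose to pass to the limit in the drift by (a) showing every limit point has time marginals with $L^2$ densities satisfying $\|\rho_t\|_{L^2}\le C t^{-1/4}$, and then (b) using continuity of $\mu\mapsto b^\mu$ on the set of such measures. The problem is that the approximating objects $\mu^N$ are empirical measures: they have no densities, so they never belong to this ``restricted set'', and continuity of $\mu\mapsto b^\mu$ on that set does not by itself justify $b^{\mu^N}\to b^{\mu^\infty}$. Your remark that $\|K_{r-u}\star\rho^N_u\|_\infty\lesssim (r-u)^{-1/2}\|\rho^N_u\|_{L^2}$ is exactly the kind of bound that fails for the prelimit, because $\|\rho^N_u\|_{L^2}$ is not even defined. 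A related issue is your tightness argument: bounding $\E^{\Q^N}\!\int_0^T|b^{i,N}_t|^2dt$ via an $L^2$ density bound on the empirical measure does not make sense for the same reason; one has to go through the partial Girsanov transform directly (the paper instead proves a Kolmogorov-type moment bound $\E_{\Q^N}|X^{1,N}_t-X^{1,N}_s|^4\le C|t-s|^2$ using the $r=1$ Girsanov transform and the uniform exponential-moment bound of Proposition~\ref{GirsanovKparticles}).

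The paper's route around the discontinuity is more hands-on. It works with $\E[(G(\mu^N))^2]$, expands the square into sums over two, three, and four particles, and introduces the auxiliary $4$-fold empirical measure $\nu^N=\tfrac{1}{N^4}\sum\delta_{(X^{i},X^{j},X^{k},X^{l})}$. For each fixed $(u_1,\theta_1,u_2,\theta_2,t_1,\dots,t_p)$ it shows, via the partial Girsanov transform with $r\in\{2,3,4\}$ and the Riesz representation theorem, that the finite-dimensional intensity measure $\E_{\Q^N}[\tau^N]$ converges to a measure on $\R^{2p+6}$ with an $L^2$ density. Hence the singular integrand $K_{u_1-\theta_1}(x^1-x^2)K_{u_2-\theta_2}(x^3-x^4)\1_{\{\cdot\}}$ is continuous a.e.\ with respect to the \emph{limit} measure, which is what legitimises the passage to the limit; dominated convergence in the outer time integrals is then obtained from the same Girsanov bounds. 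The $L^2$ density bound $\|\rho_t^1\|_{L^2}\le C t^{-1/4}$ for the limit is proved at the very end, again by the $r=1$ Girsanov transform, and serves only to place the limit in the uniqueness class of (MPKS), not to pass to the limit in the drift.
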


To prove the tightness and weak convergence of $\mu^N$, we use a
Girsanov transform which removes a fixed small number of the drifts of
\eqref{PSsimple} rather than all the drifts. This trick, which may be
useful for other singular interactions, allows us to get uniform
w.r.t.~$N$ bounds for the needed Girsanov exponential martingales.
\section{Preliminaries}
\label{sec:preliminaries}

On the path space define the functional
$ F_{t}(x,\widehat{x})= \left(\int_0^t
K_{t-s}(x_t-\widehat{x}_s)ds~\1_{\{x_t\neq \widehat{x}_t\}} \right)^2
$,
where $(x,\widehat{x}) \in C([0,T];\R) \times C([0,T];\R)$.
The objective of this section is to show that $\int_0^T F_t(w,Y)dt$ has
finite exponential moments when $w$ is a Brownian motion and $Y$ is a
process independent of $w$. The following key property of the kernel
$K_t$ will be used:
\modif{
\begin{equation}
\label{l2normKernel}
\|K_{t}\|_{L^p(\R)}=\left(C\int_0^\infty
\frac{z^p}{t^{p-1/2}}e^{-\frac{pz^2}{2}}dz\right)^{1/p}=
\frac{C_p}{t^{1-1/2p}},\,1\leq p<\infty.
\end{equation}
}
We will proceed as in the proof of the local Novikov Condition
(see \cite[Chapter 3, Corollary 5.14]{KaratzasShreve})~
by localizing on small intervals of time.
\begin{lemma}
\label{MainLemmas1}
Let $w:=(w_t)$ be a $(\mathcal{G}_t)$-Brownian motion with an arbitrary
initial distribution $\mu_0$ on some probability space equipped with a
probability measure $\mathbb{P}$ and a filtration~$(\mathcal{G}_t)$.
There exists a universal real number $C_0>0$ such that
$$ \forall x\in C([0,T];\R),~~\forall 0\leq t_1\leq t_2\leq T,~~
  \int_{t_1}^{t_2}\E^{\mathcal{G}_{t_1}}_\P F_t(w,x)dt
\leq C_0
\sqrt{T} \sqrt{t_2-t_1} .$$
\end{lemma}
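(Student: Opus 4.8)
The plan is to bound the conditional expectation $\E^{\mathcal{G}_{t_1}}_\P F_t(w,x)$ pointwise in $t$, then integrate over $[t_1,t_2]$. Fix a path $x \in C([0,T];\R)$ and a time $t\in(t_1,t_2]$. By the Cauchy--Schwarz inequality applied to the time integral $\int_0^t K_{t-s}(w_t-x_s)\,ds$ against the finite measure $ds$ on $[0,t]$, we get
$$
F_t(w,x) \le t\int_0^t \big(K_{t-s}(w_t-x_s)\big)^2\,ds
= t\int_0^t \frac{(w_t-x_s)^2}{2\pi (t-s)^3}\,e^{-\frac{(w_t-x_s)^2}{t-s}}\,ds .
$$
Now I take the conditional expectation given $\mathcal{G}_{t_1}$. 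Since $w$ is a $(\mathcal{G}_t)$-Brownian motion, conditionally on $\mathcal{G}_{t_1}$ the variable $w_t$ is Gaussian with mean $w_{t_1}$ and variance $t-t_1$; crucially, for $s<t$ the increment structure lets me compute the expectation of each $\big(K_{t-s}(w_t-x_s)\big)^2$ term by integrating a Gaussian density of variance $t-t_1$ against the function $y\mapsto \big(K_{t-s}(y-x_s)\big)^2$. Pulling out the (deterministic, finite) $L^2$-norm bound, this is at most $\|K_{t-s}\|_{L^2(\R)}^2$ times the sup of the Gaussian density, or more simply one bounds $\E^{\mathcal{G}_{t_1}}_\P\big[(K_{t-s}(w_t-x_s))^2\big] \le \|K_{t-s}\|_\infty^2$-type estimates; but the clean route is the convolution bound $\E^{\mathcal{G}_{t_1}}_\P\big[(K_{t-s}(w_t-x_s))^2\big] \le \|g_{t-t_1}\|_\infty \,\|K_{t-s}^2\|_{L^1(\R)} = \frac{1}{\sqrt{2\pi(t-t_1)}}\,\|K_{t-s}\|_{L^2(\R)}^2$, where $g_{t-t_1}$ is the heat kernel. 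Using \eqref{l2normKernel} with $p=2$, $\|K_{t-s}\|_{L^2(\R)}^2 = C_2^2/(t-s)^{3/2}$, so
$$
\E^{\mathcal{G}_{t_1}}_\P F_t(w,x) \le \frac{C\,t}{\sqrt{t-t_1}}\int_0^t \frac{ds}{(t-s)^{3/2}} .
$$

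The integral $\int_0^t (t-s)^{-3/2}\,ds$ diverges at $s=t$, so this crude splitting is too lossy near the diagonal and must be refined. The fix is to not throw away the Gaussian factor $e^{-(w_t-x_s)^2/(t-s)}$ for $s$ close to $t$: split the $s$-integral at, say, $s = t - (t-t_1)$. For $s \le t-(t-t_1)$ one has $t-s \ge t-t_1$ and can afford the crude $L^2$ bound above, giving a contribution $\lesssim \frac{t}{\sqrt{t-t_1}}\cdot (t-t_1)^{-1/2}\cdot(t-t_1)\cdot(\text{bounded factor})$ — here one should instead keep $\int_0^{t-(t-t_1)}(t-s)^{-3/2}ds \le 2(t-t_1)^{-1/2}$, yielding $\lesssim t/(t-t_1)$, which integrated in $t$ over $(t_1,t_2]$ still looks borderline. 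A cleaner approach: bound $\big(K_{t-s}(w_t-x_s)\big)^2 \le \frac{C}{(t-s)^{3/2}}\cdot\frac{1}{(t-s)^{3/2}}(w_t-x_s)^2 e^{-(w_t-x_s)^2/(t-s)}$ is not quite it either; the sharp move is to use that $z^2 e^{-z^2}$ is bounded, giving the pointwise bound $\big(K_{u}(y)\big)^2 \le \frac{C}{u^2}$ AND the $L^1$-in-$y$ bound $\|K_u^2\|_{L^1} = C u^{-3/2}$, and interpolate: for the conditional expectation use whichever is better depending on whether $t-s \lessgtr t-t_1$.

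The honest structure of the argument I will carry out: write $\E^{\mathcal{G}_{t_1}}_\P\big[(K_{t-s}(w_t-x_s))^2\big]$ as $\int_\R \frac{1}{\sqrt{2\pi(t-t_1)}}e^{-(y-w_{t_1})^2/(2(t-t_1))}(K_{t-s}(y-x_s))^2\,dy$. Bound this by $\min\!\Big(\frac{\|K_{t-s}\|_{L^2}^2}{\sqrt{2\pi(t-t_1)}},\ \|K_{t-s}\|_{L^2}^2 \cdot \|\text{Gaussian}\|_{L^\infty}\Big)$ — same thing — or better, by $\|K_{t-s}^2 \star g_{t-t_1}\|_\infty$ and use Young: $\le \min(\|K_{t-s}^2\|_1 \|g_{t-t_1}\|_\infty, \|K_{t-s}^2\|_\infty \|g_{t-t_1}\|_1) = \min\big(C(t-s)^{-3/2}(t-t_1)^{-1/2},\ C(t-s)^{-2}\big)$. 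Then
$$
\E^{\mathcal{G}_{t_1}}_\P F_t(w,x) \le C\,t\int_0^t \min\!\Big(\frac{1}{(t-s)^{3/2}(t-t_1)^{1/2}},\ \frac{1}{(t-s)^{2}}\Big)\,ds ,
$$
and the $\min$ makes the $s$-integral converge: for $t-s\ge (t-t_1)$ use the first term, for $t-s\le(t-t_1)$ use the second, obtaining $\int_0^t \min(\cdots)\,ds \le \frac{C}{t-t_1}$ (up to logarithmic checking; if a log appears, absorb it using $t\le T$). Finally integrate in $t$: $\int_{t_1}^{t_2}\frac{C t}{t-t_1}\,dt$ — this is the one genuinely delicate point, since $(t-t_1)^{-1}$ is not integrable at $t=t_1$. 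So the min-splitting must actually yield a bound of the form $\frac{C}{\sqrt{t-t_1}\,\sqrt{t}}$ or $\frac{C}{\sqrt{t-t_1}}$ after the $s$-integration (keeping one more half-power of decay from the region $t-s \asymp s \asymp t$), and then $\int_{t_1}^{t_2} \frac{C\sqrt{T}}{\sqrt{t-t_1}}\,dt = 2C\sqrt{T}\sqrt{t_2-t_1}$, which is exactly the claimed $C_0\sqrt T\sqrt{t_2-t_1}$.

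\textbf{Main obstacle.} The crux is getting the $s$-integral near the diagonal $s\to t$ to produce a bound that is integrable in $t$ near $t=t_1$ — i.e. extracting the combination $(t-t_1)^{-1/2}$ rather than $(t-t_1)^{-1}$ from $\int_0^t \min(\cdots)ds$, which forces one to retain the Gaussian mass $e^{-(w_t-x_s)^2/(t-s)}$ (equivalently, the $L^2$-rather-than-$L^\infty$ estimate on $K$) on the region where $t-s$ is small. This is precisely where the hypothesis $K\in L^1(0,T;L^2(\R))$ — i.e. $\|K_u\|_{L^2}^2 = C u^{-3/2}$ being integrated against the extra factor coming from conditioning — does the work, and it is the step I expect to require the most care in the bookkeeping of exponents. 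Everything else (Cauchy--Schwarz in $s$, Gaussian conditioning, Young's inequality, and the final $t$-integration) is routine.
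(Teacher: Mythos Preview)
Your plan has a genuine gap, and it is exactly at the step you flag as ``the one genuinely delicate point.'' The opening move --- Cauchy--Schwarz in the time variable $s$ with uniform weight, giving $F_t(w,x)\le t\int_0^t K_{t-s}^2(w_t-x_s)\,ds$ --- is already fatal. After this step you are committed to bounding $\int_0^t \E^{\mathcal{G}_{t_1}}\big[K_{t-s}^2(w_t-x_s)\big]\,ds$, and for this no Young-type estimate can succeed: for every choice of exponents $1\le p\le\infty$ one has
\[
\E^{\mathcal{G}_{t_1}}\big[K_{t-s}^2(w_t-x_s)\big]\le \|g_{t-t_1}\|_{L^p}\,\|K_{t-s}^2\|_{L^{p'}}=\frac{C}{(t-t_1)^{(1-1/p)/2}\,(t-s)^{3/2+1/(2p)}},
\]
and the exponent on $(t-s)$ is always at least $3/2$, so the $s$-integral diverges at $s=t$. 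In particular your $\min$-splitting cannot work: near $s=t$ the minimum of your two candidates is $C(t-s)^{-3/2}(t-t_1)^{-1/2}$, still non-integrable. The hoped-for extra half-power does not materialise.

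The paper's proof avoids this by \emph{not} applying Cauchy--Schwarz in $s$. Instead it simply expands the square,
\[
F_t(w,x)\le \int_0^t\!\!\int_0^t \big|K_{t-s}(w_t-x_s)\,K_{t-u}(w_t-x_u)\big|\,ds\,du,
\]
and then applies Cauchy--Schwarz in the probability variable:
\[
\E^{\mathcal{G}_{t_1}}\big|K_{t-s}K_{t-u}\big|\le \big(\E^{\mathcal{G}_{t_1}}K_{t-s}^2\big)^{1/2}\big(\E^{\mathcal{G}_{t_1}}K_{t-u}^2\big)^{1/2}\le \frac{C}{(t-t_1)^{1/2}(t-s)^{3/4}(t-u)^{3/4}}.
\]
The square root halves the exponent from $3/2$ to $3/4$, which is integrable, and one immediately gets $\E^{\mathcal{G}_{t_1}}F_t(w,x)\le C\sqrt t\,(t-t_1)^{-1/2}$, hence the claim after integrating in $t$. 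An equivalent repair of your route would be a \emph{weighted} Cauchy--Schwarz in $s$, e.g.\ against $(t-s)^{-a}\,ds$ with $a\in(1/4,1/2)$; but the paper's argument is the cleanest.
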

\begin{proof}
By the definition of $F$,
\begin{equation}
\label{lemma1:1}
 \int_{t_1}^{t_2}\E^{\mathcal{G}_{t_1}}_\P F_t(w,x)dt\leq
\int_{t_1}^{t_2} \int_0^t\int_0^t \E^{\mathcal{G}_{t_1}}_\P
|K_{t-s}(w_t-x_s)K_{t-u}(w_t-x_u)|dsdudt.
\end{equation}
\modif{Let $g_t(x):=\frac{1}{\sqrt{2 \pi t}}e^{-\frac{x^2}{2t}}$.}~ In view of \eqref{l2normKernel}, one has
$$
\sqrt{\int K^2_{t-s}(y+w_{t_1} -x_s))g_{t-t_1}(y)dy}\leq C\frac{\|
K_{t-s}\|_{L^2(\R)}}{(t-t_1)^{1/4}} \leq
\frac{C}{(t-s)^{3/4}(t-t_1)^{1/4}}.$$
%
Here we used that the density of $w_t-w_{t_1}$ is bounded by
$\frac{C}{\sqrt{t-t_1}} $.
Coming back to \eqref{lemma1:1},
$$ \int_{t_1}^{t_2}\E^{\mathcal{G}_{t_1}}_\P F_t(w,x)dt \leq
\int_{t_1}^{t_2}\frac{C}{\sqrt{t-t_1}}
\int_0^t\int_0^t\frac{1}{(t-s)^{3/4}(t-u)^{3/4}}dsdudt=\int_{t_1}^{t_2}
 \frac{C\sqrt{t}}{\sqrt{t-t_1}}dt. $$

\end{proof}
%
\begin{lemma}
\label{Mainlemmas3}
Same assumptions as in Lemma \ref{MainLemmas1}. Let $C_0$ be as in
Lemma \ref{MainLemmas1}. For any $\kappa>0$, there exists $C(T,\kappa)$ \modif{independent of $\mu_0$}~
such that, for any $ 0\leq T_1\leq T_2 \leq T$ satisfying $  T_2-T_1 <
\frac{1}{C_0^2T\kappa^2} $,
$$\forall x\in C([0,T];\R),~~\E^{\mathcal{G}_{T_1}}_\P \left[\exp \left\{\kappa \int_{T_1}^{T_2}
F_t(w,x)dt\right\}\right]\leq C(T,\kappa).$$
\end{lemma}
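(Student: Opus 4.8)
The plan is to derive the exponential bound from Lemma \ref{MainLemmas1} by the standard Khasminskii-type argument, expanding the exponential as a power series and controlling each term by iterating the conditional linear-in-time estimate. More precisely, I would first write
\[
\E^{\mathcal{G}_{T_1}}_\P\left[\exp\left\{\kappa\int_{T_1}^{T_2}F_t(w,x)\,dt\right\}\right]
=\sum_{n\ge 0}\frac{\kappa^n}{n!}\,\E^{\mathcal{G}_{T_1}}_\P\left[\left(\int_{T_1}^{T_2}F_t(w,x)\,dt\right)^n\right],
\]
which is legitimate by monotone convergence since $F\ge 0$. The $n$-th moment is a symmetrized integral of $\E^{\mathcal{G}_{T_1}}_\P[F_{t_1}(w,x)\cdots F_{t_n}(w,x)]$ over $T_1\le t_1\le\cdots\le t_n\le T_2$; conditioning successively on $\mathcal{G}_{t_{n-1}},\dots,\mathcal{G}_{t_1}$ and applying Lemma \ref{MainLemmas1} at each stage (using that $x$ is deterministic, so the lemma applies with $t_1,t_2$ replaced by $t_{k},t_{k+1}$) yields the bound
\[
\E^{\mathcal{G}_{T_1}}_\P\left[\left(\int_{T_1}^{T_2}F_t(w,x)\,dt\right)^n\right]
\le n!\,\left(C_0\sqrt{T}\right)^n\left(\int_{T_1}^{T_2}\sqrt{t_n-T_1}\,\cdots\right),
\]
and after bounding each $\sqrt{t_k-t_{k-1}}\le\sqrt{T_2-T_1}$ one gets $\le n!\,\big(C_0\sqrt{T}\sqrt{T_2-T_1}\,(T_2-T_1)\big)^{?}$—let me be careful here and instead just bound each factor crudely by $C_0\sqrt{T}\sqrt{T_2-T_1}$, giving the $n$-th term of the series $\le \big(\kappa C_0\sqrt{T}\sqrt{T_2-T_1}\big)^n$.

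Summing the geometric series then gives
\[
\E^{\mathcal{G}_{T_1}}_\P\left[\exp\left\{\kappa\int_{T_1}^{T_2}F_t(w,x)\,dt\right\}\right]\le\frac{1}{1-\kappa C_0\sqrt{T}\sqrt{T_2-T_1}},
\]
which is finite and bounded by a constant $C(T,\kappa)$ precisely under the hypothesis $T_2-T_1<\frac{1}{C_0^2 T\kappa^2}$, since that is exactly the condition making $\kappa C_0\sqrt{T}\sqrt{T_2-T_1}<1$. Note the bound depends only on $T,\kappa$ and not on $\mu_0$ or on $x$, because Lemma \ref{MainLemmas1} itself is uniform in the initial law and in $x$; this is what yields the claimed independence of $\mu_0$.

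The key technical point I would need to justify with care is the iterated conditioning step: writing $G:=\int_{T_1}^{T_2}F_t(w,x)\,dt$ and showing $\E^{\mathcal{G}_{T_1}}_\P[G^n]\le n!\,(C_0\sqrt{T}\sqrt{T_2-T_1})^n$ by the tower property, peeling off the innermost time integral $\int_{t_{n-1}}^{T_2}\E^{\mathcal{G}_{t_{n-1}}}_\P[F_{t_n}(w,x)]\,dt_n\le C_0\sqrt{T}\sqrt{T_2-T_1}$ and recursing. The main obstacle is purely bookkeeping—making sure the conditional expectations are applied in the right order and that the Markov property of Brownian motion lets us replace $\E^{\mathcal{G}_{t_k}}_\P$ by a bound depending only on the increment $t_{k+1}-t_k$ while $x$ stays frozen—rather than anything deep; the real work was already done in Lemma \ref{MainLemmas1}. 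One subtlety worth a line: the product $F_{t_1}\cdots F_{t_n}$ involves $F_{t_k}$ which is $\mathcal{G}_{t_k}$-measurable, so conditioning on $\mathcal{G}_{t_{n-1}}$ pulls out $F_{t_1}\cdots F_{t_{n-1}}$ and leaves only $\E^{\mathcal{G}_{t_{n-1}}}_\P[F_{t_n}]$ to estimate, which is exactly the form Lemma \ref{MainLemmas1} provides.
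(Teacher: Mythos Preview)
Your proposal is correct and follows essentially the same Khasminskii-type argument as the paper: expand the exponential, bound the $n$-th moment by $n!\,(C_0\sqrt{T}\sqrt{T_2-T_1})^n$ via iterated conditioning on $\mathcal{G}_{t_{n-1}},\dots,\mathcal{G}_{t_1}$ using Lemma~\ref{MainLemmas1}, and sum the resulting geometric series. The only cosmetic difference is that the paper works with finite partial sums and invokes Fatou's lemma at the end, whereas you invoke monotone convergence up front; both are equivalent here.
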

\begin{proof}
We adapt the proof of Khasminskii's lemma in Simon~\cite{BSimon}. Admit
for a while we have shown that there exists a constant $C(\kappa, T)$
such that for any $M \in \N$
\begin{equation}
\label{prel:star}
\sum_{k=1}^M \frac{\kappa^k}{k!} \E^{\mathcal{G}_{T_1}}_\P \left(
\int_{T_1}^{T_2} F_t(w,x)dt\right)^{k}\leq C(T,\kappa),
\end{equation}
provided that $T_2-T_1 < \frac{1}{C_0^2T\kappa^2}$. The desired result
then follows from Fatou's lemma.

We now prove \eqref{prel:star}.
By the tower property of conditional expectation,
\begin{align*}
& \E^{\mathcal{G}_{T_1}}_{\P}\left[\left(\int_{T_1}^{T_2}
F_t(w,x)dt\right)^{k}\right]= k!
\int_{T_1}^{T_2}\int_{t_1}^{T_2} \int_{t_2}^{T_2}\cdots \int_{t_{k-2}}^{T_2}  \int_{t_{k-1}}^{T_2} \E^{\mathcal{G}_{T_1}}_{\P} \Big[ F_{t_1}(w,x)
F_{t_2}(w,x)\\
&
\times \dots\times F_{t_{k-1}}(w,x)\left(
 \E^{\mathcal{G}_{t_{k-1}}}_{\P} F_{t_k}(w,x) \right)\Big]\,dt_k \,dt_{k-1}\cdots\,dt_2
\,dt_1 .
\end{align*}
In view of Lemma \ref{MainLemmas1},
$$\int_{t_{k-1}}^{T_2} \E^{\mathcal{G}_{t_{k-1}}}_{\P}
F_{t_k}(w,x)\,dt_k \leq C_0\sqrt{T} \sqrt{T_2-t_{k-1}}\leq C_0\sqrt{T}
\sqrt{T_2-T_1}.$$
Therefore, by Fubini's theorem,
\begin{align*}
&\E^{\mathcal{G}_{T_1}}_{\P}\left[\left(\int_{T_1}^{T_2}
F_t(w,x)dt\right)^{k}\right]\leq k!  C_0\sqrt{T} \sqrt{T_2-T_1}
\int_{T_1}^{T_2} \int_{t_1}^{T_2}\int_{t_2}^{T_2} \cdots  \int_{t_{k-2}}^{T_2} \E^{\mathcal{G}_{T_1}}_{\P}\Big[
F_{t_1}(w,x) F_{t_2}(w,x) \\
&\times \dots\times F_{t_{k-1}}(w,x)\Big]dt_{k-1}\cdots\,dt_2
\,dt_1.
\end{align*}
Now we repeatedly condition with respect to \modif{$\mathcal{G}_{t_{k-i}}$ ($i\geq 2$)}~ and
combine Lemma \ref{MainLemmas1} with Fubini's theorem. It comes:
\begin{equation*}
\E^{\mathcal{G}_{T_1}}_{\P}\left(\int_{T_1}^{T_2}
F_t(w,x)dt\right)^{k}
\leq k!  (C_0\sqrt{T} \sqrt{T_2-T_1})^{k-1}
\int_{T_1}^{T_2}\E^{\mathcal{G}_{T_1}}_{\P}  F_{t_1}(w,x)dt_1
\leq k!  (C_0\sqrt{T} \sqrt{T_2-T_1})^{k}.
\end{equation*}
Thus, \eqref{prel:star} is satisfied provided that $T_2-T_1 < \frac{1}{C_0^2T\kappa^2}$.
\end{proof}

\begin{proposition}
\label{est1}
 Let $T>0$. \modif{Same assumptions as in Lemma \ref{MainLemmas1}.
Suppose that the
filtered probability space is rich enough to support a continuous process $Y$ independent
 of~$(w_t)$.}~  For any
 $\alpha>0$,
\begin{equation*}
\E_{\P}\left[ \exp\left\{ \alpha \int_0^T F_t(w,Y)dt\right\}\right]
\leq C(T, \alpha),
\end{equation*}
where $C(T, \alpha)$ depends only on $T$ and $\alpha$, but
does neither depend on the law~$\mathcal{L}(Y)$ \modif{nor of $\mu_0$}.
\end{proposition}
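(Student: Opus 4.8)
The plan is to deduce Proposition~\ref{est1} from Lemma~\ref{Mainlemmas3} by a chaining/partition argument in time, conditioning on the independent process $Y$. First I would fix $\alpha>0$ and choose a mesh $0=T_0<T_1<\dots<T_m=T$ fine enough that each subinterval satisfies $T_{\ell+1}-T_\ell<\frac{1}{C_0^2 T\alpha^2}$, for instance $m=\lceil C_0^2 T^2\alpha^2\rceil+1$ equal pieces; crucially $m$ depends only on $T$ and $\alpha$. Since $F_t\ge 0$, we have $\int_0^T F_t(w,Y)\,dt=\sum_{\ell=0}^{m-1}\int_{T_\ell}^{T_{\ell+1}}F_t(w,Y)\,dt$, and by the generalized H\"older inequality
$$\E_\P\Big[\exp\Big\{\alpha\int_0^T F_t(w,Y)dt\Big\}\Big]\le \prod_{\ell=0}^{m-1}\Big(\E_\P\Big[\exp\Big\{m\alpha\int_{T_\ell}^{T_{\ell+1}}F_t(w,Y)dt\Big\}\Big]\Big)^{1/m}.$$
So it suffices to bound each factor with $\kappa:=m\alpha$; but note this changes $\kappa$, so the mesh must in fact be chosen so that $T_{\ell+1}-T_\ell<\frac{1}{C_0^2 T(m\alpha)^2}$ — one picks $m$ large enough that $\frac{T}{m}<\frac{1}{C_0^2 T m^2\alpha^2}$, i.e. $m>C_0 T^{3/2}\alpha\cdot(\text{const})$ after rearranging; either way a finite $m=m(T,\alpha)$ works, which is all that matters.

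The second step handles one factor, say on $[T_\ell,T_{\ell+1}]$, and this is where the independence of $Y$ and $w$ is used. Conditioning on $Y$, the pair $(w,\mathcal{G}_t)$ is still a Brownian motion with respect to a filtration under which the realization of $Y$ is frozen, so for $\P$-almost every path $x$ of $Y$ we may apply Lemma~\ref{Mainlemmas3} with that deterministic $x$: since $T_{\ell+1}-T_\ell<\frac{1}{C_0^2 T\kappa^2}$,
$$\E^{\mathcal{G}_{T_\ell}}_\P\Big[\exp\Big\{\kappa\int_{T_\ell}^{T_{\ell+1}}F_t(w,x)dt\Big\}\Big]\le C(T,\kappa)\quad\text{for all }x\in C([0,T];\R).$$
Because the bound $C(T,\kappa)$ is uniform in $x$ and independent of $\mu_0$, integrating against the (conditional) law of $Y$ and then taking full expectation yields $\E_\P[\exp\{\kappa\int_{T_\ell}^{T_{\ell+1}}F_t(w,Y)dt\}]\le C(T,\kappa)$, with no dependence on $\mathcal{L}(Y)$. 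More care is needed to make the conditioning rigorous: one can invoke a regular conditional probability given $\sigma(Y)$, under which $w$ remains a Brownian motion (its law is unchanged since $Y\perp w$), and then Lemma~\ref{Mainlemmas3} applies verbatim with $\mathbb{P}$ replaced by this conditional law and the constant path $x=Y(\omega)$. Plugging these bounds back into the H\"older product gives $\E_\P[\exp\{\alpha\int_0^T F_t(w,Y)dt\}]\le C(T,m\alpha)\le C(T,\alpha)$.

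The main obstacle is the rigorous treatment of the conditioning step: Lemma~\ref{Mainlemmas3} is stated for a \emph{deterministic} path $x$, and we must transfer it to the random, but independent, process $Y$. The key point, which I would state explicitly, is that the bound in Lemma~\ref{Mainlemmas3} is \emph{uniform over all $x\in C([0,T];\R)$} and independent of the initial law $\mu_0$ of $w$; this uniformity is exactly what allows a Fubini/disintegration argument to go through regardless of $\mathcal{L}(Y)$. Provided one is willing to use a regular conditional distribution given $\sigma(Y)$ — under which, by independence, $w$ is still a $(\mathcal{G}_t)$-Brownian motion — no further estimate is required, and the proof is essentially a bookkeeping exercise combining H\"older's inequality, the finite partition, and Lemma~\ref{Mainlemmas3}.
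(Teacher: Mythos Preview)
Your treatment of the independence of $Y$ (freezing $Y$ by conditioning and then integrating against $\P^Y$) is correct and is exactly what the paper does in its first step. The genuine gap is in your time-splitting step: the H\"older route cannot close.

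Concretely, after H\"older you need each subinterval of length $T/m$ to satisfy
\[
\frac{T}{m}<\frac{1}{C_0^2\,T\,(m\alpha)^2},
\]
which rearranges to $C_0^2 T^2 \alpha^2\, m<1$, i.e.\ $m<\frac{1}{C_0^2 T^2\alpha^2}$. The inequality goes the \emph{wrong} way: increasing $m$ makes the required mesh $\frac{1}{C_0^2 T (m\alpha)^2}$ shrink like $m^{-2}$, faster than the actual mesh $T/m$ shrinks. Hence for $C_0^2 T^2\alpha^2\ge 1$ there is no admissible integer $m\ge 1$, and your claim ``either way a finite $m=m(T,\alpha)$ works'' is false.

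The paper avoids this by replacing H\"older with iterated conditioning. For a fixed path $x$, write the partition product and condition on $\mathcal{G}_{T_{m-1}}$: all factors on $[T_\ell,T_{\ell+1}]$ with $\ell\le m-2$ are $\mathcal{G}_{T_{m-1}}$-measurable (since $F_t(w,x)$ depends only on $w_t$), so they come out of the conditional expectation, and Lemma~\ref{Mainlemmas3} bounds the remaining factor with the \emph{original} $\alpha$. Peeling off one interval at a time via the tower property gives the bound $C(T,\alpha)^n$ with $n=[T/\delta]$ and $\delta=\frac{1}{2C_0^2 T\alpha^2}\wedge T$, and no blow-up occurs because $\alpha$ is never inflated. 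Once you swap H\"older for this telescoping conditioning, the rest of your argument (disintegration against $\P^Y$, uniformity in $x$ and $\mu_0$) goes through unchanged.
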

\begin{proof}
Observe that
\begin{align}
\label{est1:1}
&\E_{\P} \exp\left\{ \alpha \int_0^T F_t(w,Y)dt\right\} =
\int_{C([0,T];\R)} \E_{\P}\exp\left\{ \alpha
\int_0^TF_t(w,x)dt\right\}  \P^{Y}(dx).
\end{align}
Set  $\delta: = \frac{1}{2C_0^2 T \alpha^2} \wedge T$, where $C_0$ is
as in Lemma \ref{MainLemmas1}. Set $n:= \left[ \frac{T}{\delta}
\right]$.
  Then,
 \begin{align*}
 &\exp\left\{\alpha \int_{0}^{T}F_t(w,x)dt\right\}
 ={\displaystyle\prod_{m=0}^{n}} \exp\left\{\alpha
\int_{(T-(m+1)\delta)\vee 0}^{T-m\delta}F_t(w,x)\,dt\right\}.
\end{align*}
Condition the right-hand side by $\mathcal{G}_{(T-\delta)\vee
0}$.
Notice that $\delta$ is small enough to be in the setting
of Lemma~\ref{Mainlemmas3}. Thus,
\begin{align*}
&\E_{\P} \exp\left\{ \alpha \int_0^T
F_t(w,x)dt\right\} \leq C(T,\alpha)\E_{\P}
{\displaystyle\prod_{m=1}^{n}} \exp\left\{\kappa N
\int_{(T-(m+1)\delta)\vee 0}^{T-m\delta}F_t(w,x)\,dt\right\}.
\end{align*}
Successively, conditioning by $\mathcal{G}_{(T-(m+1))\vee 0}$ for $
m=1, 2,
\dots n$ and using Lemma \ref{Mainlemmas3},
$$\E_{\P} \exp\left\{ \alpha \int_0^T
F_t(w,x)dt\right\} \leq C^n(T,\alpha)\E_{\P}
\exp\left\{\int_0^{(T-n\delta)\vee 0} F_t(w,x)dt\right\}
\leq  C(T,\alpha).$$
The proof is completed by plugging the preceding estimate
into~\eqref{est1:1}.
\end{proof}
\section{Existence of the particle system and  propagation of chaos}
\label{sec:propagation-chaos}
\subsection{Existence: Proof of Theorem \ref{PSexistence} }
We start from a probability space $(\Omega,\F,(\F_t;\,0\leq t\leq
T),\W)$ on which are defined an $N$ - dimensional Brownian motion
$W=(W^1, \dots, W^N)$ \modif{and the random variables $X_0^{i,N}$(see \eqref{PSsimple})}. Set $\bar{X}_t^{i,N} := X_0^{i,N} + W_t^i
~(t\leq T)$ and $\bar{X}:=(\bar{X}^{i,N},1\leq i\leq N)$.
Denote the drift terms in \eqref{PSsimple} by  $b^{i,N}_t(x)$, $x\in C([0,T];\R)^N$,
and the vector of all the drifts as $B_t^N(x)=(b^{1,N}_t(x), \dots, b^{N,N}_t(x))$.
For a fixed $N \in \N$, consider
$$Z_T^N:= \exp \left\{ \int_0^T B_t^N(\bar{X}) \cdot dW_t - \frac{1}{2}\int_0^T \left| B_t^N(\bar{X}) \right|^2dt \right\}.$$
To prove Theorem \ref{PSexistence}, it suffices to prove the following Novikov condition holds true
(see e.g. \cite[Chapter 3, Proposition 5.13]{KaratzasShreve}):
\begin{proposition}
\label{PS:novikovNpart}
For any $T>0$, $N \geq 1$, $\kappa >0$, there exists $C(T,N, \kappa)$ such that
\begin{equation} \label{ineq:exp-G-N}
\E_\W \left( \exp\left\{ \kappa \int_0^T |B_t^N(\bar{X})|^2dt\right\} \right)\leq
C(T,N, \kappa).
\end{equation}
\end{proposition}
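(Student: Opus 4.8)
\textbf{Proof plan for Proposition \ref{PS:novikovNpart}.}

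The plan is to reduce the bound on the full squared drift $|B_t^N(\bar X)|^2 = \sum_{i=1}^N (b_t^{i,N}(\bar X))^2$ to Proposition \ref{est1}, applied one particle at a time. First I would fix $i$ and observe that the $i$-th drift is, up to the factor $1/N$, a sum over $j \neq i$ of terms $\int_0^t K_{t-s}(\bar X_t^{i,N} - \bar X_s^{j,N})\,ds\,\1_{\{\bar X_t^{i,N}\neq \bar X_t^{j,N}\}}$. By the Cauchy--Schwarz inequality in the $j$-sum (there are at most $N$ terms), $(b_t^{i,N}(\bar X))^2 \le \frac{1}{N}\sum_{j\neq i} F_t(\bar X^{i,N},\bar X^{j,N})$, where $F$ is the functional of Section \ref{sec:preliminaries}. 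Summing over $i$ gives $\int_0^T |B_t^N(\bar X)|^2\,dt \le \frac1N \sum_{i\neq j}\int_0^T F_t(\bar X^{i,N},\bar X^{j,N})\,dt$, a sum of $N(N-1)$ terms. Then by the generalized H\"older (or arithmetic--geometric mean) inequality applied to the exponential of a normalized sum,
$$
\E_\W\Big[\exp\Big\{\kappa\int_0^T|B_t^N(\bar X)|^2dt\Big\}\Big]
\le \frac{1}{N(N-1)}\sum_{i\neq j}\E_\W\Big[\exp\Big\{\kappa(N-1)\int_0^T F_t(\bar X^{i,N},\bar X^{j,N})\,dt\Big\}\Big],
$$
so it suffices to bound each single term $\E_\W[\exp\{\kappa(N-1)\int_0^T F_t(\bar X^{i,N},\bar X^{j,N})\,dt\}]$ by a constant depending only on $T$, $N$, $\kappa$.

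For a single such term I would invoke Proposition \ref{est1} with $\alpha := \kappa(N-1)$: the process $w := \bar X^{i,N} = X_0^{i,N} + W^i$ is a $(\F_t)$-Brownian motion with initial law $\rho_0$ under $\W$, and $Y := \bar X^{j,N} = X_0^{j,N} + W^j$ is a continuous process independent of $w$ (since the $W^k$ are independent Brownian motions and the $X_0^{k,N}$ are i.i.d. and independent of $W$). Hence the filtered space is rich enough in the sense required, and Proposition \ref{est1} yields the bound $C(T,\kappa(N-1))$, which depends only on $T$, $N$, $\kappa$. Feeding this into the H\"older estimate above gives $C(T,N,\kappa) := C(T,\kappa(N-1))$ (or a power of it), proving \eqref{ineq:exp-G-N}.

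The only genuine point requiring care — really the only obstacle — is the combinatorial bookkeeping: making sure the constant produced is allowed to blow up with $N$ (which is fine here, since Theorem \ref{PSexistence} is for fixed $N$), and correctly tracking that the H\"older split forces the exponent inside each single-pair expectation to grow like $\kappa(N-1)$ rather than $\kappa$. This is harmless because Proposition \ref{est1} holds for \emph{every} $\alpha>0$ with a finite constant. One should also double-check the independence structure: $w=\bar X^{i,N}$ and $Y=\bar X^{j,N}$ are indeed independent under $\W$, and the indicator $\1_{\{w_t \neq Y_t\}}$ inside $F_t$ is exactly the one appearing in the drift, so no discrepancy arises. (The uniform-in-$N$ version of this estimate, needed for propagation of chaos, is precisely what the ``partial Girsanov transform'' of Theorem \ref{PSPropaChaos} is designed to handle, and is not needed here.)
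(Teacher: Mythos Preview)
Your proof is correct and follows essentially the same approach as the paper: bound $(b_t^{i,N})^2$ by an average of $F_t$-terms via Jensen/Cauchy--Schwarz, use convexity of the exponential to reduce to single-pair expectations, and then invoke Proposition~\ref{est1} on each pair using the independence of the $\bar X^{i,N}$. The only cosmetic difference is that the paper keeps the diagonal terms (which vanish by the indicator) and thus ends up with $\kappa N$ rather than your $\kappa(N-1)$ inside the single-pair exponential; this is immaterial.
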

\begin{proof}
Drop the index $N$ for simplicity. Using the definition of
$(B^N_t)$ and Jensen's inequality one has
  \begin{align*}
  &\E_{\W}\left[\exp\left\{\kappa\int_{0}^{T}\left|B_t^N(\bar{X})\right|^2\,dt\right\}\right]\leq\E_{\W}\left[\exp\left\{\frac{1}{N}\sum_{i=1}^{N}\frac{1}{N}\sum_{j=1}^N \int_{0}^{T}\kappa NF_t(\bar{X}^i,\bar{X}^j)\,dt\right\}\right],
\end{align*}
from which we deduce
$$   \E_{\W}\left[\exp\left\{\kappa\int_{0}^{T}\left|B_t^N(\bar{X})\right|^2\,dt\right\}\right]\leq \frac{1}{N}\sum_{i=1}^{N}\frac{1}{N}\sum_{j=1}^{N}
\E_{\W}\left[\exp\left\{\kappa
N\int_{0}^{T}F_t(\bar{X}^i,\bar{X}^j)\,dt\right\}\right].$$
As the $\bar{X}^{i}$'s 
are independent Brownian motions,
we are in a
position to use Proposition~\ref{est1}. This concludes the proof.

\end{proof}

\subsection{Girsanov transform for $1\leq r<N$ particles}
\label{sec:GirsTrK}

In the proof of Theorem \ref{PSexistence} we used~\eqref{l2normKernel}
and a Girsanov transform. However, the right-hand side
of~(\ref{ineq:exp-G-N}) goes to infinity with $N$.
Thus, Proposition~\ref{PS:novikovNpart} cannot be used to prove
the tightness and propagation of chaos of the particle system. We
instead define an intermediate particle system. For any integer $1\leq
r<N$, proceeding as in the proof of Theorem~\ref{PSexistence}  one gets
the existence of a weak solution on $[0,T]$ to
\begin{equation}
\label{PS_Kdrift}
\begin{cases}
&d\widehat{X}_t^{l,N}=  dW_t^l, \quad 1\leq l\leq r,\\
& d \widehat{X}_t^{i,N}=  \left\{\frac{1}{N} \sum_{j=r+1}^N \int_0^t
K_{t-s}(\widehat{X}_t^{i,N}-\widehat{X}_s^{j,N})ds
~\mathbbm{1}_{\{\widehat{X}_t^{i,N}\neq \widehat{X}_t^{j,N}\}} \right\}
dt + dW_t^i, \quad r+1\leq i\leq N,\\
& \widehat{X}_0^{i,N} 
 \text{  i.i.d. and independent of }(W):=(W^i, 1\leq i\leq N).
\end{cases}
\end{equation}
Below we set $\hat{X}:=(\hat{X}^{i,N}, 1\leq i\leq N)$ and we denote by
$\Q^{r,N}$ the probability measure under which $\hat{X}$ is well
defined.
Notice that
$(\widehat{X}^{l,N}, 1\leq l\leq r)$ is independent of
$(\widehat{X}^{i,N}, r+1\leq i\leq N)$.  We now study the exponential
local martingale associated to the change of drift between
\eqref{PSsimple} and \eqref{PS_Kdrift}. For $x\in C([0,T];\R)^N$ set
\begin{multline*}
 \beta^{(r)}_t(x):= \Big(b_t^{1,N}(x), \dots, b_t^{r,N}(x), \frac{1}{N}
 \sum_{i=1}^r \int_0^t K_{t-s}(x_t^{r+1}-x^{i}_s)ds ~
 \1_{\{x_t^{r+1}\neq x^{i}_t\}}, \dots, \\
\frac{1}{N} \sum_{i=1}^r \int_0^t K_{t-s}(x_t^{N}-x^{i}_s)ds ~
\1_{\{x_t^{N}\neq x^{i}_t\}} \Big) .
\end{multline*}
In the sequel we will need uniform w.r.t~$N$ bounds for moments of
\begin{equation} \label{def:Z^(k)}
Z_T^{(r)}:= \exp\left\{ -\int_0^T \beta^{(r)}_t(\widehat{X}) \cdot
dW_t -\frac{1}{2}\int_0^T| \beta^{(r)}_t(\widehat{X})|^2 dt \right\}.
\end{equation}

\begin{proposition}
\label{GirsanovKparticles}
For any $T>0$, $\gamma>0$ and $r\geq 1$ there exists  $N_0 \geq r$ and
$ C(T, \gamma, r)$ s.t.
\begin{equation*}
\forall N\geq N_0, \quad  \E_{\Q^{r,N}} \exp\left\{ \gamma
\int_0^T|\beta^{(r)}_t(\widehat{X})|^2dt\right\}\leq C(T, \gamma, r).
\end{equation*}
\end{proposition}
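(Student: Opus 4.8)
The plan is to estimate $|\beta^{(r)}_t(\widehat X)|^2$ by a sum of at most $N$ terms of the form $F_t(\widehat X^{a},\widehat X^{b})$, apply Jensen's inequality to pull the exponential inside the average, and then control each resulting expectation $\E_{\Q^{r,N}}\exp\{\gamma N\int_0^T F_t(\widehat X^a,\widehat X^b)\,dt\}$ uniformly in $N$ using Proposition~\ref{est1}. The key structural observation is that in $\beta^{(r)}$ only the $r$ ``frozen'' coordinates $\widehat X^{1,N},\dots,\widehat X^{r,N}$ ever appear as the \emph{second} (convolution) argument of $K$: the first $r$ components of $\beta^{(r)}$ are the genuine drifts $b^{i,N}(\widehat X)$ for $i\le r$, which under $\Q^{r,N}$ are driven by $\widehat X^{i,N}-\widehat X^{j,N}$ with $\widehat X^{i,N}$ a pure Brownian motion, and the remaining components are sums over $i\le r$ only. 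So every single scalar squared-drift term is of the shape $\big(\frac1N\sum_{j\in S}\int_0^t K_{t-s}(\widehat X^a_t-\widehat X^b_s)\,ds\,\1\big)^2$ where the index set $S$ has size at most $N$, and — crucially — in each such term the ``left'' process $\widehat X^a$ is a Brownian motion independent of the ``right'' process $\widehat X^b$. For the first $r$ components $a=i\le r$ and the $\widehat X^a$ are independent Brownian motions; for components $i>r$, $a=i$ is a solution of the interacting SDE but $b=l\le r$ is one of the frozen Brownian motions, and by construction $(\widehat X^{l,N},1\le l\le r)$ is independent of $(\widehat X^{i,N},r+1\le i\le N)$ — so again we may view $\widehat X^a$ as a Brownian motion and $\widehat X^b$ as an independent process, or vice versa, whichever Proposition~\ref{est1} requires (there the ``$w$'' is Brownian and ``$Y$'' is an arbitrary independent process).

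Concretely I would argue as follows. By Cauchy–Schwarz inside the $\frac1N\sum$ and then over the $\le N$ scalar components of $\beta^{(r)}$,
\begin{align*}
\int_0^T|\beta^{(r)}_t(\widehat X)|^2\,dt
&\le \frac{1}{N^2}\sum_{\text{components }a}\Big(\sum_{b\in S_a} \sqrt{\int_0^T F_t(\widehat X^a,\widehat X^b)\,dt}\Big)^2\\
&\le \frac{1}{N^2}\sum_{a}|S_a|\sum_{b\in S_a}\int_0^T F_t(\widehat X^a,\widehat X^b)\,dt
\le \sum_{(a,b)}\frac{1}{N}\int_0^T F_t(\widehat X^a,\widehat X^b)\,dt,
\end{align*}
where the final sum runs over at most $N^2$ pairs with coefficient $1/N^2$, i.e. it is a convex combination (over $\le N^2$ pairs, weight $1/N^2$ each, and note $|S_a|\le N$) of the quantities $N\int_0^T F_t(\widehat X^a,\widehat X^b)\,dt$. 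Then Jensen gives
$$\E_{\Q^{r,N}}\exp\Big\{\gamma\int_0^T|\beta^{(r)}_t(\widehat X)|^2dt\Big\}\le \frac{1}{N^2}\sum_{(a,b)}\E_{\Q^{r,N}}\exp\Big\{\gamma N\int_0^T F_t(\widehat X^a,\widehat X^b)\,dt\Big\},$$
and it remains to bound each term on the right by a constant independent of $N$.

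That last point is exactly where Proposition~\ref{est1} does \emph{not} immediately apply: its conclusion is uniform in $T$ and $\alpha$, but here $\alpha=\gamma N$ grows with $N$, and tracking the $\alpha$-dependence through Lemmas~\ref{MainLemmas1}–\ref{Mainlemmas3} shows the bound $C(T,\gamma N)$ blows up — this is precisely the obstruction noted after Proposition~\ref{PS:novikovNpart}. The resolution, and the reason $N_0\ge r$ enters, is the factor $\frac1N$ in the \emph{definition} of $\beta^{(r)}$: after Cauchy–Schwarz the frozen sums $\frac1N\sum_{i=1}^r$ contribute only $r$ terms, so for the components $i>r$ one gets $\frac{1}{N^2}(r\sum_{i=1}^r\int F)$, i.e. a genuine $1/N$ (indeed $r/N^2$) prefactor, and the exponent is $\gamma \cdot (r^2/N) \int_0^T F_t\,dt$ which, for $N\ge N_0:=\lceil \gamma r^2\rceil\vee r$ say, has coefficient $\le 1$; then Proposition~\ref{est1} with $\alpha=1$ (a fixed constant!) gives a bound $C(T,1)$, uniform in $N$. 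For the first $r$ components the drifts $b^{i,N}$ with $i\le r$ are themselves $\frac1N\sum_{j\ne i}$, $N-1$ terms, so no $N$-independent gain there — but there are only $r$ such components, and one must instead use that these involve $\widehat X^{i,N}$, $i\le r$, which are \emph{exactly} Brownian motions under $\Q^{r,N}$, so Proposition~\ref{PS:novikovNpart}'s argument applies verbatim to this block of size $r$, giving a bound $C(T,N,\kappa)$ — but now with $N$ replaced by the \emph{fixed} number $r$, hence $C(T,r,\gamma)$, uniform in the true $N$. Assembling the $O(N^2)$ terms each bounded by a constant, against the $\frac{1}{N^2}$ weight, yields the claimed uniform bound $C(T,\gamma,r)$; the main subtlety to get right is this careful bookkeeping of which factors of $1/N$ survive Cauchy–Schwarz and hence which exponential moments are taken at a bounded exponent versus at the fixed exponent controlled by the $r$-particle (rather than $N$-particle) Novikov estimate.
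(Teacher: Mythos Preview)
Your overall architecture---split $|\beta^{(r)}_t|^2$ into the block $i\le r$ and the block $i>r$, use Jensen/H\"older to reduce to exponential moments of single terms $\int_0^T F_t(\widehat X^a,\widehat X^b)\,dt$, and exploit that in every such term one of the two processes is a genuine Brownian motion under $\Q^{r,N}$---is exactly the paper's strategy. Your treatment of the first block ($i\le r$) is correct: $\widehat X^{i,N}$ is Brownian and sits in the \emph{first} slot of $F_t$, so Proposition~\ref{est1} applies with exponent $O(\gamma r)$, independent of $N$.

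The gap is in the second block. You write that for $F_t(\widehat X^{r+j},\widehat X^{i})$ with $i\le r$ one can ``view $\widehat X^a$ as a Brownian motion and $\widehat X^b$ as an independent process, or vice versa, whichever Proposition~\ref{est1} requires.'' But $F_t(x,\widehat x)=\big(\int_0^t K_{t-s}(x_t-\widehat x_s)\,ds\,\1\big)^2$ is \emph{not} symmetric in its two arguments: the first process is evaluated at the single outer time $t$, the second is integrated over $[0,t]$. The proof of Proposition~\ref{est1} (through Lemma~\ref{MainLemmas1}) uses precisely that the \emph{first} argument has a Gaussian transition density bounded by $C/\sqrt{t-t_1}$; with the roles swapped this device is unavailable, since $\widehat X^{r+j}$ has no such density bound. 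The paper acknowledges this and defers the estimate
\[
\sup_{N\ge N_0}\E\exp\Big\{\frac{2\gamma r^2}{N}\int_0^T F_t(\widehat X^{r+j},\widehat X^{i})\,dt\Big\}\le C(T,r,\gamma)
\]
to a separate result (Proposition~\ref{est2} in the Appendix), whose proof is genuinely different: it conditions on the filtration of the \emph{second}-slot Brownian motion $w$, and---because the $1/N$ prefactor makes localization unnecessary---obtains the moment bounds directly. So you have correctly identified that the surviving $1/N$ is what saves the day, but the mechanism by which it does so is not ``apply Proposition~\ref{est1} vice versa''; a distinct estimate is required.
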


\begin{proof}
For $x\in C([0,T];\R)^N$, one has
\begin{align*}
|\beta^{(r)}_t(x)|^2&= \sum_{i=1}^r \left( \frac{1}{N}\sum^N_{{j=1}}
\int_0^t K_{t-s}(x_t^i-x_s^j)ds\1_{\{x_t^{j}\neq x^{i}_t\}}\right)^2\\
&\quad +
\frac{1}{N^2}\sum_{j=1}^{N-r}\left(\sum_{i=1}^r \int_0^t
K_{t-s}(x^{r+j}_t-x^{i}_s)ds\1_{\{x_t^{r+j}\neq x^{i}_t\}}\right)^2.
\end{align*}
By Jensen's inequality,
$$|\beta^{(r)}_t|^2 \leq \frac{1}{N}\sum_{i=1}^r \sum_{j=1}^N F_t(x^i,
x^j) + \frac{r}{N^2}\sum_{j=1}^{N-r}\sum_{i=1}^r F_t(x^{r+j}, x^i). $$
For simplicity we below write $\E$ (respectively, $\hat{X}^i$) instead
of
$\E_{\Q^{r,N}}$ (respectively, $\hat{X}^{i,N}$).
Observe that
\begin{align*}
& \E\exp\Big\{ \gamma
\int_0^T|\beta^{(r)}_t(\widehat{X})|^2dt\Big\}\\
& \leq  \Big( \E \exp\Big\{\sum_{i=1}^r \frac{2\gamma
}{N}\sum_{j=1}^N\int_0^T F_t(\widehat{X}^i,\widehat{X}^j)
dt\Big\}\Big)^{1/2}  \Big( \E \exp\Big\{  \frac{2\gamma  r}{N^2}
\sum_{j=1}^{N-r}\sum_{i=1}^r
\int_0^TF_t(\widehat{X}^{r+j},\widehat{X}^i)  dt\Big\}\Big)^{1/2}\\
&\leq  \Big( \prod_{i=1}^r \frac{1}{N}
\sum_{j=1}^N\E\exp\Big\{2\gamma  r \int_0^T
F_t(\widehat{X}^i,\widehat{X}^j)  dt\Big\}  \Big)^{\frac{1}{2r}}
\Big( \prod_{j=1}^{N-r} \frac{1}{r}\sum_{i=1}^r
\E \exp \Big\{\frac{2\gamma  r^2}{N}
\int_0^TF_t(\widehat{X}^{r+j},\widehat{X}^i)  dt \Big\}
\Big)^{\frac{1}{2(N-r)}}.
\end{align*}
In view of Proposition \ref{est1}, it now remains to prove that there exists $N_0 \in \N$ such that
\begin{equation*}
\sup_{N \geq N_0}\E \left[\exp \left\{\frac{2\gamma  r^2}{N}
\int_0^TF_t(\hat{X}^{r+j},\hat{X}^i)  dt \right\} \right]\leq C(T, r,
\gamma).
\end{equation*}
We postpone the proof of this inequality  
to the Appendix (see Proposition \ref{est2}).
\end{proof}

\subsection{Propagation of chaos : Proof of Theorem \ref{PSPropaChaos}}
\modif{\subsubsection{Tightness}}
We start with showing the tightness of $\{\mu^{N}\}$ and of an auxiliary empirical measure which is needed in the sequel.
\modif{
\begin{lemma}
\label{Tightness:crit}
Let $\Q^N$ be as above.
The sequence $\{\mu^{N}\}$ is tight under $\Q^N$. In addition, let
$\nu^N:= \frac{1}{N^4}\sum_{i,j,k,l=1}^N
\delta_{X^{i,N}_.,X^{j,N}_.,X^{k,N}_.,X^{l,N}_.}$. The sequence
$\{\nu^{N}\}$ is tight under $\Q^N$.
\end{lemma}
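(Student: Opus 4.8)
The plan is to prove tightness at the level of the laws of the individual (exchangeable) particles, and then invoke the standard fact that tightness of the one-particle laws of an exchangeable system implies tightness of the empirical measure $\mu^N$ (and, applied to $4$-tuples, of $\nu^N$). Concretely, since $\mu^N$ is a random element of $\mathcal P(C([0,T];\R))$ and the $X^{i,N}$ are exchangeable under $\Q^N$, it suffices to show that the sequence of laws $\mathcal L_{\Q^N}(X^{1,N})$ on $C([0,T];\R)$ is tight, together with an analogous statement for $\mathcal L_{\Q^N}(X^{1,N},X^{2,N},X^{3,N},X^{4,N})$ on $C([0,T];\R)^4$; for the latter it is enough to have tightness of each marginal, i.e. again of $\mathcal L_{\Q^N}(X^{1,N})$. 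So everything reduces to a single tightness statement for one tagged particle, uniformly in $N$.

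For that one would use Aldous's (or Kolmogorov's) tightness criterion. Write, under $\Q^N$, $X^{1,N}_t = X_0^{1,N} + \int_0^t b^{1,N}_s(X^N)\,ds + W^1_t$, with $W^1$ a $\Q^N$-Brownian motion. The Brownian part and the initial condition are obviously tight (the $X_0^{i,N}$ are i.i.d.\ with fixed law $\rho_0$). For the drift part one needs a uniform-in-$N$ modulus estimate: for $0\le s\le t\le T$,
\begin{equation*}
\E_{\Q^N}\Big[\Big(\int_s^t b^{1,N}_u(X^N)\,du\Big)^2\Big]
\le (t-s)\,\E_{\Q^N}\Big[\int_s^t \big(b^{1,N}_u(X^N)\big)^2\,du\Big]
\le (t-s)\,\E_{\Q^N}\Big[\int_0^T \big(b^{1,N}_u(X^N)\big)^2\,du\Big].
\end{equation*}
The remaining point is to bound $\E_{\Q^N}\int_0^T (b^{1,N}_u(X^N))^2\,du$ uniformly in $N$. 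Here the Girsanov trick of Section \ref{sec:GirsTrK} with a fixed $r$ (say $r=1$ or $r=2$) is used: change measure from $\Q^N$ to $\Q^{r,N}$ via $Z_T^{(r)}$, so that under $\Q^{r,N}$ the first $r$ particles are Brownian motions independent of the rest. Then by Cauchy--Schwarz,
\begin{equation*}
\E_{\Q^N}\Big[\int_0^T \big(b^{1,N}_u(X^N)\big)^2\,du\Big]
= \E_{\Q^{r,N}}\Big[(Z_T^{(r)})^{-1}\int_0^T \big(b^{1,N}_u(\widehat X)\big)^2\,du\Big]
\le \Big(\E_{\Q^{r,N}}(Z_T^{(r)})^{-2}\Big)^{1/2}\Big(\E_{\Q^{r,N}}\big(\textstyle\int_0^T (b^{1,N}_u(\widehat X))^2\,du\big)^2\Big)^{1/2},
\end{equation*}
where $\E_{\Q^{r,N}}(Z_T^{(r)})^{-2}$ is controlled by Proposition \ref{GirsanovKparticles} (it is an exponential moment of $\int_0^T|\beta^{(r)}_t(\widehat X)|^2\,dt$ plus a stochastic integral, handled by the usual Cauchy--Schwarz splitting) and the second factor is controlled by expanding $(b^{1,N}_u)^2 \le \frac1N\sum_{j} F_u(\widehat X^1,\widehat X^j)$, using exchangeability and the independence of $\widehat X^1$ from the other particles under $\Q^{r,N}$, and applying Proposition \ref{est1} to get exponential (hence all polynomial) moments of $\int_0^T F_t(\widehat X^1,\widehat X^j)\,dt$ that are uniform in $N$.

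The main obstacle is exactly this uniform-in-$N$ control of the drift's second moment: one cannot use the full Girsanov transform of Theorem \ref{PSexistence} because its bound blows up with $N$, so the removal of only $r$ drifts (Proposition \ref{GirsanovKparticles}) is essential, and one must organize the Cauchy--Schwarz/H\"older splitting so that (a) only a bounded number of Girsanov factors appear and (b) the surviving $F$-integrals are of a pair of \emph{independent} processes so Proposition \ref{est1} applies. Once $\sup_N \E_{\Q^N}\int_0^T (b^{1,N}_u(X^N))^2\,du < \infty$ is in hand, the Aldous criterion gives tightness of $\mathcal L_{\Q^N}(X^{1,N})$, and the reduction described in the first paragraph yields tightness of $\{\mu^N\}$ and of $\{\nu^N\}$, completing the proof.
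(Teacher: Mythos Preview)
Your plan is correct and rests on the same key tool as the paper (the partial Girsanov transform of Section~\ref{sec:GirsTrK} and Proposition~\ref{GirsanovKparticles}), but the organization differs. The paper does \emph{not} decompose $X^{1,N}_t-X^{1,N}_s$ into drift plus Brownian part; instead it applies the Girsanov change $\Q^N\to\Q^{1,N}$ directly to the increment, obtaining
\[
\E_{\Q^N}\big[|X^{1,N}_t-X^{1,N}_s|^4\big]
=\E_{\Q^{1,N}}\big[(Z_T^{(1)})^{-1}|\widehat X^{1,N}_t-\widehat X^{1,N}_s|^4\big]
\le \big(\E_{\Q^{1,N}}(Z_T^{(1)})^{-2}\big)^{1/2}\,C|t-s|^2,
\]
since under $\Q^{1,N}$ the first particle is itself a Brownian motion; the only remaining estimate is $\sup_N\E_{\Q^{1,N}}(Z_T^{(1)})^{-2}<\infty$, obtained from Proposition~\ref{GirsanovKparticles}. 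This gives Kolmogorov's criterion in one stroke. Your route isolates $\E_{\Q^N}\!\int_0^T(b^{1,N}_u)^2du$ and bounds it after Girsanov and Cauchy--Schwarz, which works but requires the extra step of controlling $\E_{\Q^{1,N}}\big(\int_0^T(b^{1,N}_u(\widehat X))^2du\big)^2$ via Proposition~\ref{est1}; note also that the second-moment bound $C(t-s)$ you display is only borderline for Kolmogorov, so you should either go to fourth moments of the drift (which your exponential-moment argument does supply) or commit to Aldous. The paper's approach is shorter because it avoids estimating the drift altogether; yours is a valid alternative that makes the role of the uniform $L^2$-in-time bound on the drift more explicit.
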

}
\begin{proof}
The tightness of $\{\mu^{N}\}$,
respectively $\{\nu^{N}\}$, results from the tightness of the
intensity
measure $\{\E_{\Q^N} \mu^N(\cdot)\}$, respectively$\{\E_{\Q^N}
\nu^N(\cdot)\}$: See Sznitman~\cite[Prop. 2.2-ii]{Sznitman}. 
By symmetry, in both cases it suffices to check the
tightness of $\{\text{Law}(X^{1,N})\}$. We aim to prove
\begin{equation}
\label{PS:KolmCrit}
\exists C>0, \forall N\geq N_0, \quad
\E_{\Q^N}[|X_t^{1,N}-X_s^{1,N}|^4] \leq C_T |t-s|^2, \quad 0\leq
s,t\leq T,
\end{equation}
where $N_0$ is as in Proposition \ref{GirsanovKparticles}. Let
$Z_T^{(1)}$ be as in~(\ref{def:Z^(k)}). One has
$$\E_{\Q^N}[|X_t^{1,N}-X_s^{1,N}|^4] =
\E_{\Q^{1,N}}[(Z_T^{(1)})^{-1}|\widehat{X}_t^{1,N}-\widehat{X}_s^{1,N}|^4].
$$
As $\widehat{X}^{1,N}$ is a one dimensional Brownian motion under
$\Q^{1,N}$,
$$\E_{\Q^N}[|X_t^{1,N}-X_s^{1,N}|^4]\leq
(\E_{\Q^{1,N}}[(Z_T^{(1)})^{-2}])^{1/2}
(\E_{\Q^{1,N}}[|\widehat{X}_t^{1,N}-\widehat{X}_s^{1,N}|^8])^{1/2}\leq
(\E_{\Q^{1,N}}[(Z_T^{(1)})^{-2}])^{1/2}  C |t-s|^2.$$
Observe that, for a Brownian motion $(W^\sharp)$ under $\Q^{1,N}$,
$$\E_{\Q^{1,N}}[(Z_T^{(1)})^{-2}]=\E_{\Q^{1,N}} \exp\left\{ 2
\int_0^T\beta_t^{(1)}(\hat{X})\cdot dW_t^\sharp - \int_0^T
|\beta^{(1)}_t(\hat{X})|^2dt\right\} .$$
Adding and subtracting  $\modif{3}\int_0^T|\beta^{(1)}_t|^2dt$ and applying
again the Cauchy-Schwarz inequality,
$$\E_{\Q^{1,N}}[(Z_T^{(1)})^{-2}]\leq \left(\E_{\Q^{1,N}}
\exp\left\{\modif{6}\int_0^T|\beta_t^{(1)}(\hat{X})|^2dt\right\}\right)^{1/2}.$$
Applying Proposition \ref{GirsanovKparticles} with $k=1$ and
$\gamma=\modif{6}$, we obtain the desired result.
\end{proof}
\modif{\subsubsection{Convergence}}
To prove Theorem 
~\ref{PSPropaChaos} we have to show that any limit
point of $\{\text{Law}(\mu^N)\}$ is $\delta_\Q$, where $\Q$ is the
unique solution to (MPKS). Since the particles
interact through an unbounded singular functional,
we adapt the arguments in Bossy and Talay \cite[Thm. 3.2]{BossyTalay}.

Let $\phi \in C_b(\R^p)$, $f\in C_b^2(\R)$, $0<t_1< \cdots <t_p\leq s
<t \leq T$ and $m\in \Pp(C[0,T];\R)$. Set
\begin{multline*}
G(m):= \int_{(C[0,T];\R)^2} \phi(x^1_{t_1}, \dots, x^1_{t_p})
\Big(f(x^1_t) -f(x^1_s) \\
-\modif{\frac{1}{2}}\int_s^tf''(x^1_u)du - \int_s^t
f'(x^1_u)\1_{\{x^1_u \neq x^2_u\}}\int_0^u
K_{u-\theta}(x^1_u-x^2_\theta)d\theta du\Big)dm(x^1)\otimes dm(x^2).
\end{multline*}

We start with showing that
\begin{equation}
\label{PoC:1}
\lim_{N \to \infty}\E [\left(G(\mu^N)\right)^2]=0.
\end{equation}
Observe that
\begin{align*}
& G(\mu^N)= \frac{1}{N}\sum_{i=1}^N \phi(X^{i,N}_{t_1}, \dots, X^{i,N}_{t_p})  \Big(f(X_t^{i,N})-f(X_s^{i,N})- \modif{\frac{1}{2}}\int_s^tf''(X^{i,N}_{u})du   \\
&  -\frac{1}{N} \sum_{j=1}^N \int_s^t f'(X^{i,N}_u)\1_{\{X^{i,N}_u \neq
X^{j,N}_u\}}\int_0^u
K_{u-\theta}(X^{i,N}_u-X^{j,N}_\theta) d\theta \ du \Big).
\end{align*}
Apply It\^o's formula to $\frac{1}{N}\sum_{i=1}^N(f(X_t^{i,N})-f(X_s^{i,N}))$. It comes:
$$\E [\left(G(\mu^N)\right)^{\modif{2}}]\leq \frac{C}{N^2} \E\Big(\sum_{i=1}^N
\int_s^t
f'(X_u^{i,N})dW_u^i\Big)^2 \leq \frac{C}{N}. $$
Thus, \eqref{PoC:1} holds true.

\modif{Suppose for a while we have proven the following lemma:
\begin{lemma}
\label{PoC:biglemma}
Let $\Pi^\infty \in \mathcal{P}(\mathcal{P}(C([0,T];\R)^4))$ be a limit point of $\{\text{law}(\nu^N)\}$.
Then
\begin{equation}
\label{PoC:BigLimit}
\begin{split}
& \lim_{N \to \infty}\E [\left(G(\mu^N)\right)^2]=
\int_{\mathcal{P}(C([0,T];\R)^4)} \left\{ \int_{C([0,T];\R)^4} \Big[
f(x^1_t)
-f(x^1_s)-\frac{1}{2}\int_s^tf^{''}(x^1_u)du \right. \\
&\left. ~~ - \int_s^t f^{'}(x^1_u)\1_{\{x^1_u
\neq x^2_u\}}\int_0^u K_{u-\theta}(x^1_u-x^2_\theta)d\theta du\Big]
\times \phi(x^1_{t_1}, \dots, x^1_{t_p}) d\nu(x^1\ldots,,x^4)
\vphantom{\frac15}\right\}^2 d\Pi^\infty(\nu),
\end{split}
\end{equation}
and
\begin{enumerate}[i)]
\item Any $\nu \in \mathcal{P}(C([0,T];\R)^4)$ belonging to the support of $\Pi^\infty $ is a product measure: $\nu = \nu^1 \otimes \nu^1 \otimes \nu^1
\otimes \nu^1$.
\item For any $t\in (0,T]$, the time marginal $\nu^1_t$ of $\nu^1$ has
a density $\rho_t^1$ which satisfies
$$\exists C_T, ~ \forall 0<t\leq T, ~ ~ \|\rho_t^1\|_{L^2(\R)}\leq \frac{C_T}{t^{\frac{1}{4}}}.$$
\end{enumerate}
\end{lemma}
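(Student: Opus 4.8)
## Proof strategy for Lemma~\ref{PoC:biglemma}

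The plan is to exploit the Girsanov-type bounds obtained in Proposition~\ref{GirsanovKparticles} and Proposition~\ref{est1} to pass to the limit inside the square appearing in $\E[(G(\mu^N))^2]$, and then to identify the limiting measure. Concretely, I would start by rewriting $\E[(G(\mu^N))^2]$ as an integral against $\text{law}(\nu^N)$: since $(G(\mu^N))^2$ is a functional of the empirical measure $\mu^N$ that depends only on pairs of trajectories, and since $\nu^N$ is the four-fold empirical measure, there is a bounded measurable functional $H$ on $\mathcal{P}(C([0,T];\R)^4)$ with $(G(\mu^N))^2 = H(\nu^N)$; indeed expanding the square produces a sum over quadruples $(i,j,k,l)$, which is exactly what $\nu^N$ averages. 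Thus $\E[(G(\mu^N))^2] = \E_{\Q^N}[H(\nu^N)]$. The difficulty is that $H$ is \emph{not} continuous on $\mathcal{P}(C([0,T];\R)^4)$: it involves the singular functional $\int_0^u K_{u-\theta}(x^1_u - x^2_\theta)\,d\theta$, which is neither bounded nor continuous in the trajectories. So the heart of the proof is a uniform integrability / truncation argument allowing one to replace $K$ by a truncated continuous kernel $K^\varepsilon$, pass to the limit with the continuous functional, and then remove the truncation uniformly in $N$.

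The key steps, in order. \textbf{(1)} Establish a uniform-in-$N$ integrability bound: for the drift-type terms $\int_s^t f'(x^1_u)\1_{\{x^1_u\neq x^2_u\}}\int_0^u K_{u-\theta}(x^1_u - x^2_\theta)\,d\theta\,du$ one controls, via Cauchy--Schwarz and the definition of $F_t$, the $L^2(\Q^N)$ norm by $\E_{\Q^N}\int_0^T F_t(X^{1,N}, X^{2,N})\,dt$. To bound this under $\Q^N$ one performs the partial Girsanov change of measure to $\Q^{1,N}$ (or $\Q^{2,N}$): $\E_{\Q^N}[\cdots] = \E_{\Q^{1,N}}[(Z_T^{(1)})^{-1}\cdots]$, then Cauchy--Schwarz splits off $\E_{\Q^{1,N}}[(Z_T^{(1)})^{-2}]^{1/2}$, which is bounded uniformly in $N\geq N_0$ by Proposition~\ref{GirsanovKparticles}, while the remaining factor is a Brownian expectation of $\int_0^T F_t(w, Y)\,dt$ type quantities, finite by Proposition~\ref{est1}. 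This yields $\sup_{N\geq N_0}\E_{\Q^N}\big[(\int_0^T F_t(X^{1,N},X^{2,N})dt)^q\big] \leq C(T,q)$ for any $q\geq 1$, hence uniform integrability of the singular term and of $H(\nu^N)$. \textbf{(2)} For $\varepsilon>0$ replace $K$ by a bounded Lipschitz truncation $K^\varepsilon$ and let $H^\varepsilon$ be the corresponding functional, now bounded and continuous on $\mathcal{P}(C([0,T];\R)^4)$; by portmanteau and tightness of $\{\nu^N\}$ (Lemma~\ref{Tightness:crit}), along a subsequence $\E_{\Q^N}[H^\varepsilon(\nu^N)] \to \int H^\varepsilon\,d\Pi^\infty$. \textbf{(3)} Control $|H - H^\varepsilon|$ in $L^1$ uniformly in $N$ using step (1) together with the estimate $\|K_t - K_t^\varepsilon\|_{L^2(\R)} \leq$ (something vanishing with $\varepsilon$) and the density bound $\|\rho_t^{1,N}\|_{L^2}$ — or, more robustly, redo the step-(1) Girsanov estimate with $K - K^\varepsilon$ in place of $K$, whose $L^1(0,T;L^2(\R))$ norm tends to $0$. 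Sending $\varepsilon\to 0$ after $N\to\infty$ gives \eqref{PoC:BigLimit}, and the same computation applied on $\Pi^\infty$ gives that the singular term is $\Pi^\infty$-integrable.

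For parts \textbf{(i)} and \textbf{(ii)}: (i) is the standard propagation-of-chaos mechanism — the empirical measure is $\mu^N = \frac1N\sum\delta_{X^{i,N}}$ and $\nu^N$ is built from it, so any limit point of $\text{law}(\nu^N)$ is supported on measures of the form $\nu\otimes\nu\otimes\nu\otimes\nu$ where $\nu$ is a limit point of $\text{law}(\mu^N)$; one checks this by testing against tensor products of bounded continuous functions and using the fact that the diagonal terms in the sum over $(i,j,k,l)$ contribute $O(1/N)$, so asymptotically $\nu^N$ behaves like the fourth tensor power of $\mu^N$. For (ii), the density bound $\|\rho_t^1\|_{L^2}\leq C_T t^{-1/4}$: under $\Q^{1,N}$, $\widehat X^{1,N}$ is a Brownian motion started from $\rho_0$, and the law of $X^{1,N}_t$ under $\Q^N$ has density $\rho_t^{1,N}(y) = \E_{\Q^{1,N}}[(Z_T^{(1)})^{-1}\mid \widehat X_t^{1,N}=y]\,g_t\star\rho_0(y)$ up to the usual conditioning formula; testing against $\psi\in L^2$, Cauchy--Schwarz plus the uniform bound on $\E_{\Q^{1,N}}[(Z_T^{(1)})^{-2}]$ and $\|g_t\star\rho_0\|_{L^2}\leq C t^{-1/4}$ gives the bound uniformly in $N$, which then passes to the limit by Fatou / lower semicontinuity of the $L^2$ norm under weak convergence.

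The main obstacle is step (3): the singular kernel is not continuous on path space, so one cannot directly apply weak convergence, and the truncation error must be controlled \emph{uniformly in $N$} — this is precisely where the partial Girsanov transform (removing only one drift, so that $\E[(Z_T^{(1)})^{-2}]$ stays bounded in $N$) is essential, and where the integrability of $\int_0^T F_t\,dt$ to all orders under $\Q^N$, rather than just $\Q^{1,N}$, is needed. Establishing that these moment bounds are truly uniform in $N\geq N_0$, and combining them correctly with the $\varepsilon\to 0$ and $N\to\infty$ limits in the right order, is the delicate part; the identification of the product structure in (i) and the density bound in (ii) are comparatively routine given the machinery already set up.
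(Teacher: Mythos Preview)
Your truncation strategy is a plausible alternative, but there is a real gap in Step~(2). After replacing $K$ by a bounded Lipschitz $K^\varepsilon$, the functional $H^\varepsilon$ is still \emph{not} continuous on $\mathcal{P}(C([0,T];\R)^4)$: the indicator $\1_{\{x^1_u \neq x^2_u\}}$ remains, and it is discontinuous on the diagonal. To pass to the limit you would also have to smooth the indicator and then show that the resulting error vanishes uniformly in $N$ --- which amounts to showing that the limit gives zero mass to $\{x^1_u = x^2_u\}$, i.e.\ essentially part~(ii). So as written the argument is circular; it can be repaired by proving~(ii) first (your sketch of~(ii) is fine and independent of~\eqref{PoC:BigLimit}), but you have not ordered things that way, nor flagged the issue.

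The paper avoids truncation entirely and takes a different route. It expands $(G(\mu^N))^2$ into four sums (over pairs, triples, quadruples of particles) and, for the quadruple term, writes it as $\int_s^t\int_s^t\int_0^{u_1}\int_0^{u_2} A_N\,d\theta_1 d\theta_2 du_1 du_2$. For \emph{fixed} $(u_1,u_2,\theta_1,\theta_2)$ the kernel values $K_{u_i-\theta_i}$ are bounded, so the integrand is a bounded function on $\R^{2p+6}$ whose only discontinuity is the indicator. The paper then shows, via the partial Girsanov transform with $r=4$ and Riesz representation, that the finite-dimensional limit measure on $\R^{2p+6}$ has an $L^2$ density; hence the integrand is continuous a.e.\ under the limit and ordinary weak convergence gives $A_N \to \langle \Q_{u_1,\theta_1,u_2,\theta_2,t_1,\dots,t_p}, F\rangle$ pointwise in time. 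A domination bound (your Step~(1), carried out with the $r\in\{2,3,4\}$ Girsanov transforms and the $L^4$ norm of $K$) then justifies dominated convergence in $(u_1,u_2,\theta_1,\theta_2)$. So the paper's mechanism for handling the discontinuity is the $L^2$ density of the \emph{finite-dimensional} limit marginals, proved in the middle of the argument, rather than a global $K^\varepsilon$ approximation on path space.

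Your treatments of~(i) and~(ii) match the paper's: (i) is the standard tensorisation argument (the paper cites Bossy--Talay), and for~(ii) the paper tests $\nu^1_t$ against $h\in C_c(\R)$ and bounds $\langle \nu^1_t, h\rangle \leq C t^{-1/4}\|h\|_{L^2}$ via the $r=1$ Girsanov transform, exactly as you outline.
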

}
Then, combining \eqref{PoC:1} with the above result, we get
\begin{align*}
&\int_{C([0,T];\R)}  \phi(x^1_{t_1}, \dots, x^1_{t_p})\Big [ f(x_t^1)-f(x_s^1)- \frac{1}{2}\int_s^t f''(x_u)du \\
&  -  \int_s^t f'(x^1_{u}) \int_0^{u}\int K_{u-\theta}(x^1_{u}-y)\rho_\theta^1(y)dy d\theta du\Big]d\nu^1(x^1)=0.
\end{align*}
We deduce that $\nu^1$ solves (MPKS) and thus that $\nu^1=\Q$. As
by definition $\Pi^\infty$ is a limit point of $\text{Law}(\nu^N)$, it
follows that any limit point of $\text{Law}(\mu^N)$ is $\delta_\Q$,
which ends the proof.

\modif{\subsubsection{Proof of Lemma \ref{PoC:biglemma}}}
\paragraph*{\modif{Proof of~\eqref{PoC:BigLimit}: Step 1.}}
Notice that
\begin{align}
\label{PoC:2}
& \E [\left(G(\mu^N)\right)^{\modif{2}}] =\modif{ \frac{1}{N^2}\E \sum_{i,k=1}^N \Phi_2(X^{i,N},X^{k,N})
+  \frac{1}{N^3}\E\sum_{i,k,l=1}^N \Phi_3(X^{i,N},X^{k,N},X^{l,N}) }\nonumber \\
& \modif{+ \frac{1}{N^3}\E\sum_{i,j,k=1}^N \Phi_3(X^{k,N},X^{i,N},X^{j,N}) +
\frac{1}{N^4} \E\sum_{i,j,k,l=1}^N \Phi_4(X^{i,N},X^{j,N},X^{k,N},
X^{l,N})},
\end{align}
\modif{where
\begin{align*}
& \Phi_2(X^{i,N},X^{k,N}) := \phi(X^{i,N}_{t_1}, \dots,
X^{i,N}_{t_p})~\phi(X^{k,N}_{t_1}, \dots, X^{k,N}_{t_p})\\
& \times  \Big(f(X_t^{i,N})-f(X_s^{i,N})-
\frac{1}{2}\int_s^tf''(X^{i,N}_{u})du\Big)\Big(f(X_t^{k,N})-f(X_s^{k,N})-
 \frac{1}{2}\int_s^tf''(X^{k,N}_{u})du\Big), \\
&  \Phi_3(X^{i,N},X^{k,N},X^{l,N}) := -\phi(X^{i,N}_{t_1}, \dots,
X^{i,N}_{t_p})~\phi(X^{k,N}_{t_1}, \dots, X^{k,N}_{t_p})\\
& \times \Big(f(X_t^{i,N})-f(X_s^{i,N})-
\frac{1}{2}\int_s^tf''(X^{i,N}_{u_1})du_1\Big)\int_s^t
f'(X^{k,N}_u)\1_{\{X^{k,N}_u \neq X^{l,N}_u\}}\int_0^u
K_{u-\theta}(X^{k,N}_u-X^{l,N}_\theta)~d\theta \ du, \\
& \Phi_4(X^{i,N},X^{j,N},X^{k,N}, X^{l,N}) :=
\phi(X^{i,N}_{t_1}, \dots,X^{i,N}_{t_p})
\phi(X^{k,N}_{t_1},\dots,X^{k,N}_{t_p})
\int_s^t \int_s^t\int_0^{u_1}\int_0^{u_2} f'(X^{i,N}_{u_1})
~f'(X^{k,N}_{u_2})  \\
& \times K_{u_1-\theta_1}(X^{i,N}_{u_1}-X^{j,N}_{\theta_1})
K_{u_2-\theta_2}(X^{k,N}_{u_2}-X^{l,N}_{\theta_2}) \1_{\{X^{i,N}_{u_1}\neq
X^{j,N}_{u_1}\}} \1_{\{X^{k,N}_{u_2}\neq X^{l,N}_{u_2}\}} d{\theta_1}\ d{\theta_2}\ d{u_1} \ d{u_2}.
\end{align*}}

\modif{Let $C_N$ be the last term in the r.h.s. of~\eqref{PoC:2}.
In Steps 2-4 below we prove that $C_N$ converges as $N \to \infty$
and we
identify its limit.}~
Define the function~$F$ on $\R^{2p+6}$ as
\begin{multline} \label{def:F}
F(x^1, \dots, x^{2p+6}) := \phi(x^{7},\dots,x^{p+6})~\phi(x^{p+7}, 
\dots, x^{2p+6})~f'(x^1)~f'(x^3) \\
\times
K_{u_1-\theta_1}(x^1-x^2) K_{u_2-\theta_2}(x^3-x^4) \1_{\{x^1\neq x^5\}}
\1_{\{x^3\neq x^6\}} \1_{\{\theta_1< u_1\}}\1_{\{\theta_2< u_2\}}.
\end{multline}
 We set $C_N=\int_s^t \int_s^t\int_0^{u_1}\int_0^{u_2} A_N~d\theta_1\ d\theta_2\ du_1 \ du_2$ with
\begin{equation*}
A_N:=\frac{1}{N^4}\sum_{i,j,k,l=1}^N \E(F(X^{i,N}_{u_1},
X^{j,N}_{\theta_1},
X^{k,N}_{u_2},X^{l,N}_{\theta_2},X^{j,N}_{u_1},X^{l,N}_{u_2},X^{i,N}_{t_1},\dots,
 X^{i,N}_{t_p},X^{k,N}_{t_1}, \dots, X^{k,N}_{t_p})).
\end{equation*}
We now~aim to show that $A_N$ converges
pointwise \modif{(Step~2),}~that~$|A_N|$ is bounded from above by an
integrable function w.r.t.~$d{\theta_1}\ d{\theta_2}\ d{u_1} \ d{u_2}$
\modif{(Step~3)}, and finally to identify the limit of $C_N$ \modif{(Step~4)}.

\paragraph*{\modif{Proof of~\eqref{PoC:BigLimit}: Step 2.}}
Fix $u_1,u_2 \in [s,t]$ and $\theta_1 \in [0,u_1)$ and $\theta_2 \in
[0,u_2)$.  Define $\tau^N$ as
$$\tau^N:= \frac{1}{N^4}\sum_{i,j,k,l=1}^N \delta_{X^{i,N}_{u_1},
X^{j,N}_{\theta_1},
X^{k,N}_{u_2},X^{l,N}_{\theta_2},X^{j,N}_{u_1},X^{l,N}_{u_2},X^{i,N}_{t_1},\dots,
 X^{i,N}_{t_p},X^{k,N}_{t_1},\dots, X^{k,N}_{t_p}}.$$
Define the measure $\Q^N_{u_1,\theta_1,u_2,\theta_2,t_1, \dots,t_p}$
on  $\R^{2p+6}$ as
$ \Q^N_{u_1,\theta_1,u_2,\theta_2,t_1, \dots,t_p}(A)= \E(\tau^N(A))$.
The convergence of $\{\text{law}(\nu^N)\}$ implies the weak convergence
of $\Q^N_{u_1,\theta_1,u_2,\theta_2,t_1, \dots,t_p}$ to the measure on
$\R^{2p+6}$ defined by
\begin{align*}
&\Q_{u_1,\theta_1,u_2,\theta_2,t_1,
\dots,t_p}(A):=\int_{\mathcal{P}(C([0,T];\R)^4)} \int_{C([0,T];\R)^4}
\1_{A}(x^1_{u_1}, x^2_{\theta_1},
x^3_{u_2},x^4_{\theta_2},x^2_{u_1},x^4_{u_2}, x^1_{t_1},\dots,\\
&~~~~~~~~~~~~~~~~~~~~~~~~~~~~~~~~~~~~~~~~~~~~~~~~~~~~~~~~~~~~~~~~~~~
x^1_{t_p},x^3_{t_1},\dots, x^3_{t_p})d\nu(x^1,x^2,x^3,x^4)d\Pi^{\infty}(\nu).
\end{align*}
Let us show that this probability measure has an $L^2$-density w.r.t. the Lebesgue measure~ on $\R^{2p+6}$.
Let $h\in \modif{C_c}(\R^{2p+6})$. By weak convergence,
\begin{align*}
& \left| <\Q_{u_1,\theta_1,u_2,\theta_2,t_1, \dots,t_p},h> \right|\\
 &= \left| \lim_{N \to \infty} \frac{1}{N^4} \sum_{i,j,k,l=1}^N \E
 h(X^{i,N}_{u_1}, X^{j,N}_{\theta_1},
 X^{k,N}_{u_2},X^{l,N}_{\theta_2},X^{j,N}_{u_1},X^{l,N}_{u_2},
 X^{i,N}_{t_1},\dots, X^{i,N}_{t_p},X^{k,N}_{t_1},\dots,
 X^{k,N}_{t_p})\right|.
\end{align*}
When, in the preceding sum, at least two indices are equal, we bound the expectation by $\|h\|_\infty.$
When $i\neq j \neq k\neq l$, we apply Girsanov's transform in
Section \ref{sec:GirsTrK} with four particles
and Proposition~
\ref{GirsanovKparticles}. This procedure leads to
\begin{multline*}
\left| <\Q_{u_1,\theta_1,u_2,\theta_2,t_1, \dots,t_p},h> \right|\leq
\lim_{N \to \infty} \Big(\|h\|_\infty \frac{C}{N} \\
+  \frac{C_T}{N^4} \sum_{i\neq j \neq k \neq l} \left(\E
h^2(\hat{X}^{i,N}_{u_1}, \hat{X}^{j,N}_{\theta_1},
\hat{X}^{k,N}_{u_2},\hat{X}^{l,N}_{\theta_2},\hat{X}^{j,N}_{u_1},\hat{X}^{l,N}_{u_2},
\hat{X}^{i,N}_{t_1},\dots, \hat{X}^{i,N}_{t_p},\hat{X}^{k,N}_{t_1},\dots,
\hat{X}^{k,N}_{t_p})\right)^{1/2}\Big).
\end{multline*}
All the processes $\hat{X}^{i,N},\ldots,\hat{X}^{l,N}$ being
independent Brownian motions we deduce that
$$ \left| <\Q_{u_1,\theta_1,u_2,\theta_2,t_1, \dots,t_p},h> \right|\leq
C_{u_1,u_2,\theta_1,\theta_2, t_1, \dots, t_p}
\|h\|_{L^2(\R^{2p+6})}.
$$
It follows from Riesz's representation~ theorem that $\Q_{u_1,\theta_1,u_2,\theta_2,t_1, \dots,t_p}$ has a density w.r.t. Lebesgue's measure in $L^2(\R^{2p+6})$. Therefore,
the functional~$F$ is continuous $\Q_{u_1,\theta_1,u_2,\theta_2,t_1,
\dots,t_p}$ - a.e. Since for any fixed $u_1,u_2 \in
[s,t]$ and
$\theta_1 \in [0,u_1)$, $\theta_2 \in [0,u_2)$, $F$ is also bounded we
have
\begin{align*}
\lim_{N\to \infty} A_N = <\Q_{u_1,\theta_1,u_2,\theta_2,t_1,
\dots,t_p}, F>.
\end{align*}

\paragraph*{\modif{Proof of~\eqref{PoC:BigLimit}: Step 3.}}
In view of the definition~(\ref{def:F}) of $F$ we may restrict 
ourselves to the case $i\neq j$ and
$k\neq l$. Use the Girsanov transforms from Section
\ref{sec:GirsTrK} with $r_{i,j,k,l}\in \{2,3,4\}$ according to 
the 
respective 
cases $(i=k,j=l)$, $(i=k,j\neq l)$, $(i\neq k,j\neq l)$, etc. 
Below we write $r$ instead of $r_{i,j,k,l}$.
By exchangeability it comes:
$$A_N=\Big| \frac{1}{N^4}\sum_{i\neq j,k\neq l}
\E_{\Q^{r,N}}(Z^{(r)}_TF(\cdots))\Big|\leq
\frac{1}{N^4} \sum_{i\neq j,k\neq l}
\left(\E_{\Q^{r,N}}(Z^{(r)}_T)^2\right)^{1/2}
\Big(\E_{\Q^{r,N}}(F^2(\cdots))\Big)^{1/2}.$$
By Proposition \ref{GirsanovKparticles}, $\E_{\Q^{r,N}}(Z^{(r)}_T)^2$
can be bounded uniformly w.r.t. $N$. As the functions $f$ and $\phi$
are bounded we deduce
$$\sqrt{\E_{\Q^{r,N}}(F^2(\cdots))} \leq C \1_{\{\theta_1< u_1\}}
\1_{\{\theta_2< u_2\}}
\left(\E_{\Q^{r,N}}(K^2_{u_1-\theta_1}(W^i_{u_1}-W^j_{\theta_1})
K^2_{u_1-\theta_1}(W^k_{u_2}-W^l_{\theta_2}))
 \right)^{1/2}, $$
for $i\neq j$, $k\neq l$ and $r\equiv r_{i,j,k,l}$. In view of 
\eqref{l2normKernel},  for any $0<\theta<u<T$ we have
$$\left(\E_{\Q^{r,N}}(K^4_{u-\theta}(W^i_{u}-W^j_{\theta})\right)^{1/4}
\leq \frac{C}{u^{1/8}} \|K_{u-\theta}\|_{L^4(\R)}\leq
\frac{C}{u^{1/8}(u-\theta)^{7/8} }.$$
Therefore,
$$\left(\E_{\Q^{r,N}}(F^2(\cdots))\right)^{1/2}\leq C \frac{
\1_{\{\theta_1< u_1\}} \1_{\{\theta_2< u_2\}}}{u_1^{1/8}(u_1-\theta_1)^{7/8}
u_2^{1/8}(u_2-\theta_2)^{7/8} } .$$
We thus have obtained:
$$A_N \leq C \frac{  \1_{\{\theta_1< u_1\}} \1_{\{\theta_2<
u_2\}}}{u_1^{1/8}(u_1-\theta_1)^{7/8} u_2^{1/8}(u_2-\theta_2)^{7/8} }.$$
We remark that the r.h.s. belongs to $L^1((0,T)^4)$.

\paragraph*{\modif{Proof of~\eqref{PoC:BigLimit}: Step 4.}} Steps 2 and 3
allow us to conclude that
\begin{align*}
 \lim_{N\to \infty} C_N = \int_s^t\int_s^t \int_s^t\int_s^t
<\Q_{u_1,\theta_1,u_2,\theta_2,t_1, \dots,t_p},F>
~d\theta_1d\theta_2du_1du_2.
\end{align*}
By definition of $\Q_{u_1,\theta_1,u_2,\theta_2,t_1, \dots,t_p}$ and
$F$ we thus have obtained that
\begin{align*}
\lim_{N\rightarrow \infty} C_N  =&\int_{P(C([0,T];\R)^4)}\int_s^t\int_s^t
\int_{C([0,T];\R)^4}  f'(x^1_{u_1}) f'(x^3_{u_2}) \phi(x^1_{t_1},
\dots, x^1_{t_p})\phi(x^3_{t_1}, \dots, x^3_{t_p})  \\
&\times \int_0^{u_1}\int_0^{u_2}
K_{u_1-\theta_1}(x^1_{u_1}-x^2_{\theta_1})
K_{u_2-\theta_2}(x^3_{u_2}-x^4_{\theta_2}) \1_{\{x^1_{u_1}\neq
x^2_{u_1}\}} \1_{\{x^3_{u_2}\neq x^4_{u_2}\}} \\
&~~~~~~~~d\nu(x^1,x^2,x^3,x^4)
~d\theta_1~d\theta_2~du_1~du_2~d\Pi^\infty(\nu).
\end{align*}


A similar procedure is applied to the three other terms in
the r.h.s. of~\eqref{PoC:2}. Together with the preceding,
we obtain~\eqref{PoC:BigLimit}.

\paragraph*{Proof of~ i) and ii).}
Now, we prove the claims i) and ii) of Lemma~\ref{PoC:biglemma}.
\begin{enumerate}[i)]
\item For any measure $\nu \in \mathcal{P}(C([0,T];\R)^4)$, denote its first
marginal
by $\nu^1$. One easily gets
$ \Pi^\infty~~\text{a.e.},~~ \nu = \nu^1 \otimes \nu^1 \otimes \nu^1
\otimes \nu^1$
(see \cite[Lemma 3.3]{BossyTalay}).
\item Take $h\in \modif{C_c}(\R)$. Using similar arguments as in the above
Step~1, for any $0<t\leq T$ one has $\Pi^\infty(d \nu)$ a.e.,
\begin{align*}
&<\nu^1_t, h>= \lim_{N \to \infty}\E_{\Q^N} <\mu^N_t, h>= \lim_{N \to \infty}\E_{\Q^N}(h(X_t^{1,N}))=\lim_{N \to \infty}\E_{\Q^{1,N}}(Z_T^{(1)}h(W_t^{1,N}))\\
& \leq \frac{C}{t^{1/4}} \|h\|_{L^2(\R)}.
\end{align*}
\end{enumerate}

\section{Appendix}
\begin{proposition}
\label{est2}
Same assumptions as in Proposition \ref{est1}. There exists $N_0 \in \N$ depending only on $T$ and $\alpha$, such that
\begin{equation*}
\sup_{N \geq N_0} \E_{\P}\left[ \exp\left\{\frac{\alpha}{N} \int_0^T
\left(\int_0^t K_{t-s}(Y_t-w_s)ds\1_{\{w_t\neq Y_t\}}\right)^2dt
\right\}\right]\leq C(T, \alpha).
\end{equation*}
\end{proposition}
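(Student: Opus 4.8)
The plan is to compare the process $Y$, which appears in the argument of the kernel, with a Brownian motion, paying the price through a Girsanov density, and then to invoke Proposition~\ref{est1}. More precisely, let $\widetilde w$ be the Brownian motion obtained from $w$ by the change of measure that turns the pair $(w,Y)$ into a pair of independent Brownian motions; heuristically $Y$ solves an SDE with a drift $\beta$ of the same singular type as in~\eqref{PSsimple}, and the Radon--Nikodym density is the exponential martingale $Z_T=\exp\{-\int_0^T\beta_t\cdot dW_t-\tfrac12\int_0^T|\beta_t|^2dt\}$. After the change of measure, the quantity inside the exponential becomes $\frac{\alpha}{N}\int_0^T F_t(w,\widetilde w)\,dt$ with $w$ and $\widetilde w$ two independent Brownian motions, which is exactly the object controlled by Proposition~\ref{est1} (or rather its proof via Lemma~\ref{MainLemmas1} and Lemma~\ref{Mainlemmas3}).

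The key steps, in order, are: (1) realize $Y$ as a component of a weak solution of the particle system of type~\eqref{PS_Kdrift} (so that under a reference measure $Y$ is a Brownian motion $\widehat Y$ independent of $w$), and express the target expectation under $\P$ as an expectation under the reference measure of $Z_T^{-1}\exp\{\frac{\alpha}{N}\int_0^T F_t(w,\widehat Y)\,dt\}$, where $Z_T$ is the relevant exponential martingale; (2) apply Cauchy--Schwarz (or Hölder) to split off $\big(\E[Z_T^{-2}]\big)^{1/2}$ from $\big(\E\exp\{\frac{2\alpha}{N}\int_0^T F_t(w,\widehat Y)\,dt\}\big)^{1/2}$; (3) bound the second factor by Proposition~\ref{est1} applied to the two independent Brownian motions $w$ and $\widehat Y$, getting a bound $C(T,\alpha)$ uniform in $N$ (indeed even the constant $2\alpha$ there is independent of $N$); (4) bound the first factor $\E[Z_T^{-2}]$: writing $Z_T^{-2}$ as an exponential martingale times $\exp\{3\int_0^T|\beta_t|^2dt\}$ (adding and subtracting, exactly as in the proof of Lemma~\ref{Tightness:crit}), reduce to controlling $\E\exp\{6\int_0^T|\beta_t|^2\,dt\}$; this is again of the form $|\beta_t|^2\le \frac{C}{N}\sum F_t(\cdot,\cdot)$ by Jensen, and by the same Hölder-over-particles argument as in Proposition~\ref{GirsanovKparticles} it is handled by Proposition~\ref{est1} with a coefficient of order $O(1/N)\cdot O(N)=O(1)$, hence uniformly in $N\ge N_0$.

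The main obstacle I anticipate is a slightly delicate bookkeeping/circularity issue in step~(4): the bound on $\E[Z_T^{-2}]$ must itself be uniform in $N$, and the coefficient in front of the sum $\sum_j F_t$ coming from $|\beta_t|^2$ is $O(1/N)$ while the sum has $O(N)$ terms, so after Hölder one must check that the per-term exponential coefficient stays bounded (this is precisely why a threshold $N_0=N_0(T,\alpha)$ appears, as in Proposition~\ref{GirsanovKparticles}). One must also be careful that the factor $1/N$ in the statement is what makes $2\alpha/N\cdot$(number of cross terms) harmless, so the two uses of the $1/N$ factor — once to tame $\E[Z_T^{-2}]$, once in the exponent itself — should not be double-counted; keeping $\alpha$ fixed and letting the Girsanov density absorb the extra constants resolves this. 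The remaining steps are routine given Lemma~\ref{MainLemmas1}, Lemma~\ref{Mainlemmas3}, and Proposition~\ref{est1}.
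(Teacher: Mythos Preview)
Your approach has a genuine gap. The hypotheses of Proposition~\ref{est2} are those of Proposition~\ref{est1}: $w$ is a Brownian motion and $Y$ is \emph{any} continuous process independent of $w$. No SDE structure on $Y$ is assumed, so your step~(1) --- ``realize $Y$ as a component of a weak solution of the particle system'' --- is not available. The statement is needed, and proved, at this level of generality.

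Even if you restrict to the application inside Proposition~\ref{GirsanovKparticles}, where $Y=\hat X^{r+j}$, your step~(4) is circular. After Jensen you get $|\beta_t|^2\le \tfrac{1}{N}\sum_{k\ge r+1}F_t(\hat X^{r+j},\hat X^k)$, and H\"older leaves you with terms $\E_{\Q^{r,N}}\exp\bigl\{C\int_0^T F_t(\hat X^{r+j},\hat X^k)\,dt\bigr\}$ for $k\ge r+1$. Under $\Q^{r,N}$ neither $\hat X^{r+j}$ nor $\hat X^k$ is a Brownian motion, so Proposition~\ref{est1} does not apply. Removing the drift of a further particle by Girsanov reproduces the same difficulty for the new density; removing all drifts at once yields a density controlled only by Proposition~\ref{PS:novikovNpart}, whose bound blows up with $N$. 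The ``delicate bookkeeping'' you flag is in fact an obstruction.

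The paper's proof avoids Girsanov entirely and works for arbitrary independent $Y$. Writing $I:=\int_0^T\bigl(\int_0^t K_{t-s}(Y_t-w_s)\,ds\bigr)^2dt$ and conditioning on the path of $Y$, one bounds $I\le C\,T^{1/4}\int_0^T H_s(Y,w)\,ds$ with $H_s(\omega,\hat\omega)=\int_s^T\frac{(\omega_t-\hat\omega_s)^2}{(t-s)^{9/4}}e^{-(\omega_t-\hat\omega_s)^2/(t-s)}\,dt$. The point of this rewriting is that the Brownian motion $w$ now enters only at the \emph{inner} time $s$, so the Khasminskii iteration can run over $s_1<\dots<s_k$, using the density bound for $w_{s_k}-w_{s_{k-1}}$ at each step. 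This yields $\E I^k\le k!\,C^k T^{3k/4}$ directly, with no localisation in time and no assumption on $Y$ beyond independence. The series $\sum_k(\alpha/N)^k\E I^k/k!$ then converges once $\alpha C T/N_0<1$; this is exactly where the factor $1/N$ is used and why a threshold $N_0(T,\alpha)$ appears.
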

Compared to the proof of Proposition~\ref{GirsanovKparticles},
as $w$ and $Y$ exchanged places in the left-hand side, it
is not so obvious to use the independence of Brownian increments.
However, the weight $\frac{1}{N}$ enables us to skip the localization
part (see Lemmas \ref{MainLemmas1} and \ref{Mainlemmas3}).
\begin{proof}
Fix $N\in \N.$  Set $I:= \int_0^T \left(\int_0^t
K_{t-s}(Y_t-w_s) ds\right)^2 dt$.
One has
\begin{align*}
I^{\modif{k}} &\leq C \left( \int_0^T \int_0^t \frac{ds}{(t-s)^{3/4}} \int_0^t
\frac{(Y_t-w_s)^2}{(t-s)^{9/4}} e^{- \frac{(Y_t-w_s)^2}{t-s}}
dsdt\right)^k \\
&\leq C  T^{k/4}  \left(\int_0^T\int_0^t
\frac{(Y_t-w_s)^2}{(t-s)^{9/4}} e^{- \frac{(Y_t-w_s)^2}{t-s}}
dsdt\right)^k.
\end{align*}
For $0\leq s<T$ and for $(\omega,\widehat{\omega}) \in C([0,T];\R) \times C([0,T];\R)$, 
define the functional $H_s$ as
$$H_{s}(\omega,\widehat{\omega})=\int_s^T
\frac{(\omega_t-\widehat{\omega}_s)^2}{(t-s)^{9/4}} e^{-
\frac{(\omega_t-\widehat{\omega}_s)^2}{t-s}}  dt.$$
As the processes $Y$ and $w$ are independent,
\begin{equation*}
\E_{\P}\left(\int_0^T H_s(Y,w)ds\right)^k=\int_{C([0,T];\R)}\E_{\P}
\left(\int_0^T H_s(x,w)ds\right)^k\P^{Y}(dx).
\end{equation*}
As before we observe that, for any $x \in C([0,T];\R)$,
$$\E_{\P}\left(\int_0^T
H_s(x,w)ds\right)^k=k!\E_{\P}\int_0^TH_{s_1}(x,w)\int_{s_1}^T \dots
\E_{\P}^{\mathcal{G}_{s_{k-1}}}\left(\int_{s_{k-1}}^T
H_{s_k}(x,w)ds_k\right) \dots ds_1.$$
Therefore,
\begin{align*}
&\E_{\P}^{\mathcal{G}_{s_{k-1}}}\left(\int_{s_{k-1}}^T
H_{s_k}(x,w)ds_k\right) = \int_{s_{k-1}}^T \int_{s_k}^T \int
\frac{(x_t- z - w_{s_{k-1}} )^2}{(t-s_k)^{9/4}}  e^{-\frac{(x_t- z -
w_{s_{k-1}} )^2}{t-s_k}} g_{s_k-s_{k-1}}(z)dz dt ds_k\\
& \leq \int_{s_{k-1}}^T  \frac{C}{\sqrt{s_k-s_{k-1}}}\int_{s_k}^T \frac{1}{(t-s_k)^{3/4}} \int z^2 e^{-z^2}dzdtds_k\leq C T^{1/4} \sqrt{T-s_{k-1}}\leq C T^{3/4}.
\end{align*}
Finally,
\begin{align*}
& \E_{\P}\left(\int_0^T H_s(x,w)ds\right)^k  \leq k!
CT^{3/4}\E_{\P}\left(\int_0^TH_{s_1}(x,w)\int_{s_1}^T \cdots
\int_{s_{k-2}}^T H_{s_{k-1}}(x,w)ds_{k-1} \dots ds_1\right).
\end{align*}
Repeat the previous procedure $k-2$ times and use
that the density of $w_{s_1}$ is bounded by
$\frac{C}{\sqrt{s_1}}$. It comes:
\begin{align*}
&\E_{\P}\left(\int_0^T H_s(x,w)ds\right)^k
\leq k! C^{k-1}T^{3(k-1)/4}
\E_{\P}\left(\int_0^TH_{s_1}(x,w) ds_1\right) \\
&\leq  k! C^{k-1}T^{3(k-1)/4} \E_\P\int_0^T
\frac{C}{\sqrt{s_1}} \int_{s_1}^T \int\frac{(x_t- w_{s_1}
)^2}{(t-s_1)^{9/4}}  e^{-\frac{(x_t- w_{s_1})^2}{t-s_1}}dx dt ds_1
\leq k! C^k T^{\modif{\frac{3k}{4}}}.
\end{align*}
This implies that for any $M \geq 1$,
$$\E_{\P} \sum_{k=1}^M \frac{\alpha^k I^{\modif{k}}}{N^k k!} \leq
\sum_{k=1}^M\frac{\alpha^k C^k T^k}{N^k }.$$
Choose $N_0$ large enough to have $\frac{\alpha}{N_0} CT<1$.
To conclude, we apply Fatou's lemma.

\end{proof}

\noindent
\textbf{Acknowledgment}: For the first author, the article was prepared within the framework of a subsidy granted to the HSE by the Government of the Russian Federation for the implementation of the Global Competitiveness Program.
\footnotesize{
\bibliography{biblio}
}
\end{document}